\def\ps@pprintTitle{%
  \let\@oddhead\@empty
  \let\@evenhead\@empty
  \def\@oddfoot{\reset@font\hfil\thepage\hfil}
  \let\@evenfoot\@oddfoot
}
\newtheorem{thm}{Theorem}
\newtheorem{lem}[thm]{Lemma}
\newtheorem*{cor*}{Corollary}
\newtheorem{prop}[thm]{Proposition}
\newtheorem*{thm*}{Theorem}
\def\R{{\mathbb R}}
\def\Lop{{\mathcal L}}
\def\d{{\,\rm d}}
\def\e{{\rm e}}
\def\la{\left\langle}
\def\ra{\right\rangle}
\def\ve{{\varepsilon}}
\def\Id{{\bf{Id}}}
\def\A{{\bf{A}}}
\def\Xint#1{\mathchoice
{\XXint\displaystyle\textstyle{#1}}%
{\XXint\textstyle\scriptstyle{#1}}%
{\XXint\scriptstyle\scriptscriptstyle{#1}}%
{\XXint\scriptscriptstyle\scriptscriptstyle{#1}}%
\!\int}
\def\XXint#1#2#3{{\setbox0=\hbox{$#1{#2#3}{\int}$ }
\vcenter{\hbox{$#2#3$ }}\kern-.58\wd0}}
\def\dashint{\Xint-}
\begin{document}

\begin{frontmatter}

\title{Optimal Liouville theorem for a semilinear Ornstein-Uhlenbeck equation}

\author{Micha\l{} Fabisiak}
\author{Miko\l{}aj Sier\.z\k{e}ga\corref{mycorrespondingauthor}}

\address{Faculty of Mathematics, Informatics and Mechanics,  University of Warsaw\\  Banacha 2, 02-097 Warsaw, Poland.}

\cortext[mycorrespondingauthor]{Corresponding author}
\ead{m.sierzega@uw.edu.pl}

\begin{abstract}
The question of triviality of solutions of the semilinear Ornstein-Uhlenbeck equation,
\[
\Delta w-\frac{1}{2} \langle  x,\nabla w\rangle-\frac{\lambda}{p-1}w+|w|^{p-1}w=0,
\]
is considered. It is shown, that if $p>1$ is Sobolev subcritical or critical and $\lambda\leq 1$, then all bounded entire solutions are constant. Moreover, in the critical case, the same conclusion holds in the subclass of radial solutions provided that $n\geq 4$ and $\lambda \in \left[\frac{3 n}{2(n-1)},2\right]$. 
\end{abstract}

\begin{keyword}
Ornstein-Uhlenbeck operator\sep semilinear elliptic equation\sep Liouville theorem \sep Rellich-Pohozaev identity
\end{keyword}

\end{frontmatter}




\section{Introduction}
In this paper we consider the question of triviality of entire solutions to the semilinear  problem 
\begin{equation}\label{eq:eq}
\Delta w-\frac{1}{2} \langle x,\nabla w\rangle -\frac{\lambda}{p-1}w+|w|^{p-1}w=0\quad\mbox{on}\quad \R^n. 
\end{equation}
Here $\langle \cdot,\cdot\rangle$ is the standard inner product in $\R^n$, $\lambda$ is a scalar and $1<p\leq p_S$, where $p_S>0$ is known as the Sobolev exponent and is given by  
\[
p_S:=\begin{cases}+\infty\quad\mbox{when}\quad n=1\mbox{\,or\,}\, 2,\\
\frac{n+2}{n-2}\quad \mbox{otherwise}.\end{cases}
\]
By an entire  solution we understand a $C^2(\R^n)$ function that satisfies \eqref{eq:eq} pointwise in $\R^n$. We are interested in a Liouville-type result whereby bounded entire solutions are shown to be necessarily constant. 

Equation \eqref{eq:eq} arises as a natural generalisation of an important special case, $\lambda=1$, which plays a key r\^ole in the analysis of the self-similar singularity formation for the much studied semilinear heat flow,
\begin{equation}\label{eq:fuj}
u_t-\Delta u=|u|^{p-1}u \quad\mbox{on}\quad \R^n,
\end{equation}
see e.g.\,\cite{FILIPPASKOHN,GIGAKOHN1985,GIGAKOHN1987,GIGAKOHN1989}. More precisely, one may gain a sharper picture of the blow-up process by rephrasing \eqref{eq:fuj} in the so-called \emph{similarity variables} which incorporate the scaling properties of the equation. Then \eqref{eq:eq} describes the ground states of the transformed equation which in turn may be interpreted as \emph{admissible local profiles} for potential singularities of \eqref{eq:fuj}. The central result in this area - the Liouville theorem in \cite{GIGAKOHN1985} - states:
\begin{center}
{\it  When $1<p\leq p_S$ and $\lambda=1$, then  all bounded entire solutions of \eqref{eq:eq} are constant.}
 \end{center}
It is also known that nontrivial bounded solutions do exist when $p>p_S$ \cite{BUDDQI1989} and so the above result is optimal.
For the purpose of our discussion we might call the result of Giga and Kohn \emph{unconditional} as smoothness and boundedness of solutions are the classical prerequisites for Liouville-type theorems.   

In \cite{GIGA1986} Giga showed that when $\lambda>1$ and $1<p<p_S$  then bounded nonconstant solutions do exist. One would then expect triviality of bounded solutions for $\lambda<1$, i.e.\,that the following Liouville theorem holds: 
\begin{center}\label{claim}
{\it When $1<p\leq p_S$ and $\lambda\leq 1$, then all bounded entire solutions of \eqref{eq:eq} are constant. }
\end{center}
Plausible as it is, this problem turns out to be quite subtle and, to the best of our knowledge, 
only two \emph{conditional} results are available. 
\subsection{Theorem 3 and Proposition 9 in \cite{GIGA1986}}
\emph{Let $0\leq \lambda\leq 1$ and $1<p\leq p_S$. Then all entire  solutions of \eqref{eq:eq} that are bounded, positive, radial and nonincreasing are constant. 
}

\noindent In fact, more can be said as  the proof of the above applies to a more general class of solutions that are smooth, bounded, such that $|\nabla w|$ grows at most polynomially in $|x|$ and display a kind of monotonicity under scaling expressed by
\begin{equation}\label{giga cond}
\frac{\d}{\d \ve}\bigg|_{\ve=1}\int_{\R^n}|w(\ve x)|^2\e^{-\frac{|x|^2}{4}}\d x\leq 0.
\end{equation}
Second result, due to \cite{PELETIER1986},  requires sufficiently rapid vanishing of $|w|$ and $|\nabla w|$ at space infinity and moreover restricts the range of $\lambda$ and $p$. 

\subsection{Discussion on p.\,97 in \cite{PELETIER1986}}
\emph{Let $0\leq \lambda< \frac{n(p-1)}{4}$ and $1<p<p_S$, then all entire solutions of \eqref{eq:eq}  that are bounded and belong to $ H^1(\R^n)$ are constant. }

Our aim in this work is to demonstrate that the unconditional Liouville theorem indeed holds. Notice, that a special family of regimes falls outside of the scope of the above-mentioned results. Namely the Sobolev critical case $p=p_S$ with $\lambda>1$. The methodology developed below yields results in this case as well.

\subsection{The Liouville Theorem}
\emph{Let $1<p\leq p_S$ and $\lambda\leq 1$, then every bounded entire solution of \eqref{eq:eq} is constant. If moreover we consider radial solutions,  then the same conclusion also holds for $\lambda\in \left[\frac{3n}{2(n-1)},2\right]$ provided that $n\geq 4$ and  $ p=p_S$.}

While in this line of research the drift term $\langle  x,\nabla w\rangle$ appears as a byproduct of rephrasing the Fujita equation in the similarity variables, the differential operator 
 \begin{equation}\label{L}
 \mathcal L:=\Delta -\frac{1}{2}\langle x, \nabla\rangle =\frac{1}{\rho}\nabla \cdot(\rho\nabla )\quad  \mbox{with }\quad \rho(x)=\e^{-\frac{|x|^2}{4}} 
 \end{equation}
is well-known and widely used in the area of stochastic analysis, where it goes by the name of the Ornstein-Uhlenbeck operator, see e.g.\,\cite{SJOGREN}. In particular, triviality of smooth  bounded solutions for the linear problem $\mathcal Lu=0$ may be found in \cite{PRIOLAZABCZYK2004}. That result is cast in the probabilistic language. However, purely analytical techniques have been recently employed to further sharpen the sufficient conditions for the triviality of solutions \cite{KOGOJLANCONELLIPRIOLA2020}. 

Herein we study a Liouville-type theorem for a \emph{semilinear} problem and our methodology is purely analytical but different from the one employed in \cite{KOGOJLANCONELLIPRIOLA2020}.

The purpose of this paper is twofold. We want to resolve an intriguing open problem highlighted by Giga in \cite{GIGA1986} but we also aim to bring home a methodological point. The Rellich-Pohozaev type of argument often used in Liouville-type theorems may be briefly described as an instance of subjecting the problem under consideration to\emph{ just the right kind of test functions} as to obtain decisive integral identities that exclude nontrivial solutions. In the classical works of Rellich \cite{RELLICH1940} on the eigenproblem for the Laplace operator and Pohozaev \cite{Pohozaev1965} on its nonlinear extension the backbone of the technique is the use of the derivative of the solution in the direction of the Euler field $x$ as a test function. It is an object that displays some particularly useful computational properties under integration by parts. In particular, it is responsible for the occurrence of the dimension of the underlying domain in calculations through the relation $\nabla \cdot x=n$. The relationship between the dimension of space, the Euler field $x$ and the divergence theorem lent itself to far-reaching generalisations that include among others higher order equations and systems \cite{PUCCISERRIN1986}, Riemannian manifolds setting \cite{GOVERORSTED2013} or fractional powers of the Laplace operator \cite{OTONSERRA2014}. An analogous technique (dubbed {\it Friedrichs' a,b,c-method}) has been employed by Protter \cite{PROTTER1954} and Morawetz \cite{MORAWETZ1961} in the context of wave equations. 

For simple second-order equations posed in $\R^n$ the idea to use the Euler field is fairly evident. There are however important instances when this choice fails to deliver and a different proposal is required. One example of such a situation is the case of the Brezis-Nirenberg problem in three dimensions \cite{BREZISNIRENBERG1983}. Instead of relying on guesswork it is possible to derive a suitable test function in accordance with the properties we need it to display (see Lemma 1.4 in the proof of Theorem 1.2 in \cite{BREZISNIRENBERG1983}). In this work we want to describe in detail a procedure of deriving the 'right kind of test functions' in the context of semilinear Ornstein-Uhlenbeck equations. 

The article is organised as follows. In the Preliminaries we fix notation conventions, review the classical Liouville theorem of Giga and Kohn, extract some key observations from their technique and outline a plan for its generalisation. Next, we move on to the proof of our Liouville theorem. The demonstration relies on a collection of more technical propositions which are subsequently justified. In the Discussion we provide some remarks concerning the wider picture and point some further directions of investigation.  Lastly, we provide an Appendix that collects a brief selection of facts concerning Kummer functions that are central to the whole scheme.

\section{Preliminaries}\label{LT}
\subsection{Notational conventions}
We will work with vector fields and derived objects that depend on the parameter $\lambda$. We will however omit the $\lambda$ subscript whenever it seems superfluous. In particular, we choose to always call a solution of \eqref{eq:eq} $w$ instead of $w_\lambda$.   

When dealing with a radial function, i.e.\,one that depends on the argument through its modulus $r=|x|$ (like for example $\rho(x)=\e^{-|x|^2/4}$), we will  identify $\rho(x)\equiv\rho(r)$. Thus, with this convention we may write $\nabla \rho=\rho'\nu$, where $\nu=x/r$.

We will say that a matrix field $\A:\R^n\mapsto M^{n\times n}(\R)$ is positive (or negative)  definite on $\Omega\subseteq\R^n$ if $\A(x)$ is a positive (or negative) definite matrix for every $x\in \Omega$.  

In order to make some calculations easier to follow we will use the standard notation for the inner product in $\R^n$ also to denote the action of a symmetric matrix on a vector. Thus given a symmetric matrix $\mathbf{M}\in M^{n\times n}(\R)$ and a vector $\beta\in \R^n$ we may write $\la \mathbf{M},\beta\ra$ in place of the more common $\mathbf{M}\beta$. 

We stress that the operator $\nabla$ will be applied both to functions and vectors alike to produce gradient vectors and Jacobian matrices respectively. In particular, the hessian matrix of a smooth function $h$ will be given by $\nabla(\nabla h)=\nabla^2 h $, where the $\nabla$ operator appears in both contexts.

\subsection*{The extended Giga \& Kohn scheme}
Our reasoning is inspired by Proposition 2 and Theorem 1 in the seminal paper of Giga and Kohn \cite{GIGAKOHN1985} and we believe that a brief sketch of their original argument will render our generalisation more transparent. 
Consider then $\lambda=1 $ and $w$, a bounded entire solution of equation \eqref{eq:eq}. The equation is tested over $\R^n$ with three specific multipliers: $w\rho$, $|x|^2w\rho$ and $\langle \nabla w,x\rangle \rho$. In effect we obtain the following integral identities:
\begin{equation}\label{GK1}\tag{A}
0=\int_{\R^n}|\nabla w|^2\rho+\frac{1}{p-1}\int_{\R^n}|w|^2\rho-\int_{\R^n}|w|^{p+1}\rho,
\end{equation}
\begin{equation}\label{GK2}\tag{B}
0=\int_{\R^n}|x|^2|\nabla w|^2\rho+\left(\frac{1}{2}+\frac{1}{p-1}\right)\int_{\R^n}|x|^2|w|^2\rho-n\int_{\R^n}|w|^2\rho-\int_{\R^n}|x|^2|w|^{p+1}\rho
\end{equation}
and 
\begin{equation}\label{GK3}\tag{C}\begin{split}
0=&-\frac{n-2}{2}\int_{\R^n}|\nabla w|^2\rho-\frac{n}{2(p-1)}\int_{\R^n}|w|^2\rho+\frac{n}{p+1}\int_{\R^n}|w|^{p+1}\rho\nonumber \\
&+\frac{1}{4}\int_{\R^n}|x|^2|\nabla w|^2\rho+\frac{1}{4(p-1)}\int_{\R^n}|x|^2|w|^2\rho-\frac{1}{2(p+1)}\int_{\R^n}|x|^2|w|^{p+1}\rho.\end{split}
\end{equation}
These identities are then combined in such a way, as to cancel out integrals that involve $|w|^2$ and $|w|^{p+1}$ in their integrands:
\[
\frac{n}{p+1}\eqref{GK1}-\frac{1}{2(p+1)}\eqref{GK2}+\eqref{GK3}=0.
\]
The resulting single identity employs integrals that depend on the solution $w$ via $|\nabla w|^2$ only: 
\begin{equation}\label{eq:123}
\left(\frac{n}{p+1}-\frac{n-2}{2}\right)\int_{\R^n}|\nabla w|^2\rho+\frac{1}{2}\left(\frac{1}{2}-\frac{1}{p+1}\right)\int_{\R^n}|x|^2|\nabla w|^2\rho=0.
\end{equation} 
Since $p>1$, the the second coefficient is always positive while the first one is nonnegative whenever $p\leq p_S$. Hence, we conclude that for $p$ within this range $\nabla w\equiv 0$ on $\R^n$ and in consequence $w$ has to be constant. 

To make the procedure fully rigorous it remains to justify that all integrals involved are well defined. By working with equation \eqref{eq:fuj} it may be argued that all first and second partial derivatives of $w$ are uniformly bounded on $\R^n$, see Proposition 1 and Proposition 1' in \cite{GIGAKOHN1985}. With these bounds provided all calculations may be performed on balls $B_r$ of finite size with 
\begin{equation}\label{eq:1234}
\left(\frac{n}{p+1}-\frac{n-2}{2}\right)\int_{B_r}|\nabla w|^2\rho+\frac{1}{2}\left(\frac{1}{2}-\frac{1}{p+1}\right)\int_{B_r}|x|^2|\nabla w|^2\rho=\int_{\partial B_r}\eta
\end{equation}
in place of \eqref{eq:123} with all the byproducts of integration by parts accounted for in $\eta$. Finally the radius may  be chosen arbitrarily since the exponentially decaying weight $\rho$ easily dominates all boundary integrands.

An interesting aspect of this procedure is that, unlike the great majority of calculations in this area of mathematics, it operates on identities rather than estimates. The upside is that a fine-tuned, equation-specific computation yields an optimal result in a span of several lines of standard if slightly tedious calculation. The downside is that this procedure is extremely rigid. 

An attempt to apply the same argument to the case of $\lambda<1$ turns out to be only partially successful \cite{GIGA1986}. In essence, it is not possible anymore to cancel out the $|w|^2$ and $|w|^{p+1}$ integrals at the same time and the counterpart of identity \eqref{eq:123} now reads 
\begin{align*}
\left(\frac{n}{p+1}-\frac{n-2}{2}\right)\int_{\R^n}\left|\nabla w\right|^2\rho+&\frac{1}{2}\left(\frac{1}{2}-\frac{1}{p+1}\right)\int_{\R^n}|x|^2\left|\nabla w\right|^2\rho\\
&=\frac{\lambda-1}{2(p+1)}\left(n\int_{\R^n}\left|w\right|^2\rho-\frac{1}{2}\int_{\R^n}|x|^2\left| w\right|^2\rho\right)\\
&=-\frac{\lambda-1}{2(p+1)}\frac{\d}{\d \ve}\bigg|_{\ve=1}\int_{\R^n}|w(\ve x)|^2
\rho\d x.
\end{align*} 
Thus, once $\lambda$ deviates from the critical value, cancelations appearing in the original argument are not as thorough as needed and we are left with troublesome terms that force additional assumptions on the solution. Also, when $\lambda\neq 1$, some extra care must be taken as the boundedness of derivatives of $w$ -- asserted in the said Proposition 1' -- does not automatically extend to these cases. This drawback stems from the fact that bounds derived in the above-mentioned Proposition 1 (that concerns the Fujita equation \eqref{eq:fuj}) translate to bounds for solutions of equation \eqref{eq:eq} (with $\lambda=1$) in Proposition 1' via their similarity-scaling connection. In our case this connection is lost and we will provide necessary bounds via a different route.      

It turns out therefore that for $\lambda\neq 1$ the multipliers -- $w\rho$, $|x|^2w\rho$ and $\langle \nabla w,x\rangle \rho$ -- are not as efficient in producing decisive integral identities as required for the triviality argument. A natural course of action would be then to modify this technique by introducing other multipliers that display for $ \lambda< 1$ the beneficial properties encountered in the original method. In order to do this we first extract some key lessons from  the argument of Giga and Kohn:
\begin{itemize}
\item the identity \eqref{eq:123} may be rephrased as:
\[
\int_{\R^n}\left\langle \nabla w, \A\nabla w\right\rangle=0\quad \mbox{with}\quad \A(x)=\left[\left(\frac{n}{p+1}-\frac{n-2}{2}\right)+\frac{1}{2}\left(\frac{1}{2}-\frac{1}{p+1}\right)|x|^2\right]\rho\Id,
 \]
so that the triviality of $w$ stems from $\A(x)$ being positive definite outside the origin (in fact $\A(x)$ is positive definite on $\R^n$ when $p<p_S$ but due to the continuity of $w$ it suffices to ensure this property on $\dot \R^n=\R^n\setminus\{0\}$),
\item also, \eqref{eq:123} may be derived by using two multipliers only: 
\[
\langle \nabla w,x\rangle \rho \quad \mbox{and}\quad  \left(n-\frac{|x|^2}{2}\right)w\rho ,
\]
\item these multipliers are related in a very particular way, i.e.\,if we set the vector field $\Phi:=\nabla \rho$, then 
\[
\langle \nabla w,x\rangle \rho=-2 \langle \nabla w,\Phi\rangle \quad \mbox{and}\quad \left(n-\frac{|x|^2}{2}\right)w\rho = -2w\nabla \cdot \Phi
\]
(as we will see below the appearance of an Hermite-type polynomial, $n-\frac{|x|^2}{2}$, in the above test function is no coincidence),
\item for $\lambda=1$ these two multipliers made it possible to cancel out \emph{all 'non-gradient'} integrals, but when $\lambda\neq 1$ the conditions needed for cancellation of $|w|^2$ and $|w|^{p+1}$ integrands diverge,  
\item the cancellation of the $|w|^{p+1}$ integrals results from integration by parts and linear combination of equations and does not depend on the actual form of the vector field $\Phi$. This is \emph{not the case} for $|w|^2$ integrals, where particular properties of $\Phi=\nabla\rho$ are required.      
\end{itemize}

These observations suggest a clear scheme of extending the Rellich-Pohozaev argument of Giga and Kohn to $\lambda<1$. First, we will derive two integral identities by testing \eqref{eq:eq} on finite size balls $B_r$ with multipliers of the form $\la \nabla w,\Phi_\lambda\ra$ and $w\nabla \cdot \Phi_\lambda$ where the vector field $\Phi_\lambda$ is not as yet specified and combine them into one identity that does not contain $|w|^{p+1}$ integrals. All gradient terms in the resulting identity should be rephrased in terms of a bilinear form $\la \nabla w,\A_\lambda\nabla w\ra$ for some matrix field $\A_\lambda$ associated with $\Phi_\lambda$. This step is covered in Proposition \ref{prop:main1} below. Second, we will derive an actual form of the vector field that results in vanishing of $|w|^2$ integrals - covered in Proposition \ref{prop:main2}. Third, we will check under what conditions our matrix field $\A_\lambda$ is positive definite a.e.\,in $\R^n$ - see Proposition \ref{A+}. Finally, we will investigate the asymptotic behaviour of the collected boundary terms left after repeated applications of the divergence theorem - Proposition \ref{prop:main4}. At the intersection of these four steps we will find our generalisation.

\section{The Liouville theorem}
We begin with a statement of four technical propositions, proofs of which are postponed until the next section.  

\begin{prop}\label{prop:main1}
Let $\Omega\subset \R^n$ be a smooth bounded domain with $\nu$ an external normal vector field to its boundary and let $\Phi$ be a smooth vector field on $\R^n$. Define a matrix field

\begin{equation}\label{eq:A}
\A:=\rho\nabla \frac{\Phi}{\rho}-\left(\frac{1}{2}-\frac{1}{p+1}\right)\nabla \cdot \Phi \Id, \\
\end{equation}
a vector field
\begin{equation}
a:=\frac{1}{2(p+1)}\left(\rho\nabla\frac{\nabla \cdot\Phi}{\rho}+\lambda\Phi\right)
\end{equation}
and a scalar function 
\begin{equation}\label{eta}
 \begin{split}
 \eta:=&\langle \nabla w,\nu\rangle\langle \nabla w,\Phi\rangle-\frac{1}{2}|\nabla w|^2\langle \Phi,\nu\rangle+\frac{1}{p+1}\la\nabla w,\nu\ra w\nabla \cdot\Phi\\
&-|w|^2\la a,\nu\ra+\frac{1}{p+1}\left(-\frac{\lambda}{p-1}|w|^2+|w|^{p+1}\right)\langle\Phi,\nu\rangle. 
 \end{split}
 \end{equation}
Suppose $w$ is a smooth bounded solution of \eqref{eq:eq}, then

\begin{equation}\label{id}
\int_{\Omega} \la\nabla w, \A\nabla w\ra=\int_{\Omega}|w|^2\nabla \cdot a+\int_{\partial \Omega}\eta.
\end{equation}
\end{prop}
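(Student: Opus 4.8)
The plan is to test the equation \eqref{eq:eq} against the single multiplier $\psi:=\langle\nabla w,\Phi\rangle+\frac{1}{p+1}\,w\,\nabla\cdot\Phi$, writing $\psi_1:=\langle\nabla w,\Phi\rangle$ and $\psi_2:=w\,\nabla\cdot\Phi$ for its two pieces, to integrate $\int_\Omega\psi\cdot(\text{equation})=0$ over $\Omega$, and then to integrate by parts repeatedly so as to sort the outcome into three groups: a bilinear form in $\nabla w$, a term proportional to $|w|^2$, and boundary contributions. The weight $\frac{1}{p+1}$ on $\psi_2$ is forced by the demand that the interior $|w|^{p+1}$ integrals cancel. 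Before starting it is convenient to record the simplifications $\rho\nabla\frac{\Phi}{\rho}=\nabla\Phi+\tfrac12\,x\otimes\Phi$ and $\rho\nabla\frac{\nabla\cdot\Phi}{\rho}=\nabla(\nabla\cdot\Phi)+\tfrac12\,x\,\nabla\cdot\Phi$, both immediate from $\nabla\log\rho=-x/2$; they show that neither $\A$ nor $a$ actually carries a factor of $\rho$, which explains why $\eta$ contains none either.

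First I would treat $\psi_1$. Against $\Delta w$ this is the classical Rellich--Pohozaev computation: integrating by parts twice yields the interior terms $-\langle\nabla w,(\nabla\Phi)\nabla w\rangle+\tfrac12|\nabla w|^2\nabla\cdot\Phi$ together with the boundary terms $\langle\nabla w,\nu\rangle\langle\nabla w,\Phi\rangle-\tfrac12|\nabla w|^2\langle\Phi,\nu\rangle$. Against the drift $-\tfrac12\langle x,\nabla w\rangle$ it produces, with no integration required, the pointwise form $-\tfrac12\langle\nabla w,(x\otimes\Phi)\nabla w\rangle$. Against the nonlinearity $|w|^{p-1}w$ it equals $\langle\nabla F(w),\Phi\rangle$ with $F(w)=\frac{1}{p+1}|w|^{p+1}$, so one integration by parts gives $-\frac{1}{p+1}\int_\Omega|w|^{p+1}\nabla\cdot\Phi$ plus a boundary term, and against $-\frac{\lambda}{p-1}w$ it equals $-\frac{\lambda}{2(p-1)}\langle\nabla|w|^2,\Phi\rangle$, contributing $|w|^2$-interior and $|w|^2$-boundary pieces. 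The multiplier $\frac{1}{p+1}\psi_2$ is handled identically; crucially its nonlinear contribution is $+\frac{1}{p+1}|w|^{p+1}\nabla\cdot\Phi$, cancelling the interior $|w|^{p+1}$ term above and leaving only the boundary piece $\frac{1}{p+1}|w|^{p+1}\langle\Phi,\nu\rangle$ of $\eta$.

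Collecting the gradient terms, the pieces $-\langle\nabla w,(\nabla\Phi)\nabla w\rangle$, $-\tfrac12\langle\nabla w,(x\otimes\Phi)\nabla w\rangle$ and $(\tfrac12-\tfrac{1}{p+1})|\nabla w|^2\nabla\cdot\Phi$ (the last fusing the two Laplacian contributions) assemble into exactly $-\langle\nabla w,\A\nabla w\rangle$, and transposing this across the identity gives the left-hand side of \eqref{id}. I expect the genuine bookkeeping effort to lie in the $|w|^2$-interior terms: those from $-\frac{\lambda}{p-1}w$ tested against $\psi_1$, from the $w\langle x,\nabla w\rangle$ part of the drift against $\psi_2$, and from the $w\langle\nabla(\nabla\cdot\Phi),\nabla w\rangle$ part of the Laplacian against $\psi_2$ must each be integrated by parts once more and shown to reconstitute $\int_\Omega|w|^2\nabla\cdot a$; this rests on the elementary identities $\frac{\lambda}{2(p-1)}-\frac{\lambda}{(p-1)(p+1)}=\frac{\lambda}{2(p+1)}$ and $\nabla\cdot(x\,\nabla\cdot\Phi)=n\,\nabla\cdot\Phi+\langle x,\nabla(\nabla\cdot\Phi)\rangle$, which reproduce precisely the Laplacian, drift and zeroth-order terms of $\nabla\cdot a$. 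The remaining boundary integrands, in particular the $|w|^2$ ones, then combine---again through $\frac12-\frac{1}{p+1}=\frac{p-1}{2(p+1)}$---into $-|w|^2\langle a,\nu\rangle+\frac{1}{p+1}\bigl(-\frac{\lambda}{p-1}|w|^2+|w|^{p+1}\bigr)\langle\Phi,\nu\rangle$, completing $\eta$. Since $\Omega$ is bounded and $w$ is smooth, every integral converges and each integration by parts is legitimate, so the only real difficulty is organisational: verifying that the coefficients conspire to match the stated $\A$, $a$ and $\eta$ exactly.
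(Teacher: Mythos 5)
Your proposal is correct and is essentially the paper's own argument: the paper tests \eqref{eq:main} separately with $\langle\nabla w,\Phi\rangle$ and $w\,\nabla\cdot\Phi$ and then forms the combination $(A)+\frac{1}{p+1}(B)$, which is exactly your single multiplier $\psi$, with the same $|w|^{p+1}$ cancellation via $\frac{1}{p+1}f_\lambda(s)s-\Pi_\lambda(s)=\frac{\lambda}{2(p+1)}|s|^2$ and the same sorting into $\A$, $\nabla\cdot a$ and $\eta$. The only (immaterial) difference is bookkeeping: the paper integrates by parts in the weighted form $\Lop w=\frac{1}{\rho}\nabla\cdot(\rho\nabla w)$, producing $\rho\nabla\frac{\Phi}{\rho}$ directly, whereas you expand via $\nabla\log\rho=-x/2$ and treat the Laplacian and drift separately (and your $x\otimes\Phi$ versus $\Phi\otimes x$ is harmless, since only the symmetric part enters the quadratic form).
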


\begin{prop}\label{prop:main2}
For every $\mu\in \R$ we may find a smooth vector field $\Phi_\mu$ such that:
\begin{equation}\label{phi}
\nabla \cdot\left(\rho\nabla\frac{\nabla \cdot\Phi_\mu}{\rho}+\mu\Phi_\mu\right)=0 \quad\mbox{  on }\quad\R^n.
\end{equation}
\end{prop}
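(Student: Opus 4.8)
The plan is to reduce the vector equation \eqref{phi} to a single scalar equation for the divergence $g:=\nabla\cdot\Phi_\mu$, solve that scalar equation explicitly, and then recover a vector field with the prescribed divergence. First I would unwind the weighted gradient operator using $\nabla\rho=-\tfrac{x}{2}\rho$, which gives $\rho\nabla\tfrac{h}{\rho}=\nabla h+\tfrac{x}{2}h$ for any smooth scalar $h$. Applying this with $h=g$ and taking the divergence of the field inside \eqref{phi}, the left-hand side becomes $\Delta g+\tfrac12\la x,\nabla g\ra+\big(\tfrac{n}{2}+\mu\big)g$, since $\nabla\cdot(\tfrac{x}{2}g)=\tfrac n2 g+\tfrac12\la x,\nabla g\ra$ and $\nabla\cdot(\mu\Phi_\mu)=\mu g$. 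Thus \eqref{phi} holds precisely when $g=\nabla\cdot\Phi_\mu$ solves
\[
\Delta g+\tfrac12\la x,\nabla g\ra+\Big(\tfrac n2+\mu\Big)g=0 \quad\mbox{on }\R^n.
\]
In particular, for the proposition as stated it suffices to exhibit one smooth solution $g$ of this scalar equation together with a smooth field realising it as a divergence; no decay is required here (that will matter only later, for the boundary analysis).

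Second, I would produce a smooth solution $g$ by looking for a radial profile $g=g(r)$. Writing out the radial Laplacian turns the equation into the ODE $g''+\big(\tfrac{n-1}{r}+\tfrac r2\big)g'+\big(\tfrac n2+\mu\big)g=0$. The change of variables $t=r^2/4$ together with the substitution $g=\rho\,h=\e^{-t}h$ converts this into Kummer's confluent hypergeometric equation $t\,\ddot h+(\tfrac n2-t)\dot h+\mu h=0$, whose parameters are $b=\tfrac n2>0$ and $a=-\mu$. Since $b$ is not a nonpositive integer, this equation admits the entire solution $h=M(-\mu,\tfrac n2,t)$, so that $g(x)=\rho(x)\,M\!\big(-\mu,\tfrac n2,\tfrac{|x|^2}{4}\big)$ is a smooth, radial, globally defined solution; the relevant facts about Kummer functions are collected in the Appendix.

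Third, I would recover $\Phi_\mu$ as a radial field with divergence $g$. Seeking $\Phi_\mu=\phi(r)\nu$ reduces $\nabla\cdot\Phi_\mu=g$ to $(r^{n-1}\phi)'=r^{n-1}g$, solved by $\phi(r)=r^{1-n}\int_0^r s^{n-1}g(s)\,\d s$; equivalently one may take $\Phi_\mu=\nabla\Psi$ for the radial potential determined by $\Psi'=\phi$. By the reduction above this $\Phi_\mu$ satisfies \eqref{phi} by construction.

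The step requiring the most care is verifying that the reconstructed field is genuinely smooth across the origin, so that the genuine content of the proposition is analytic rather than algebraic. Because $g$ is a smooth radial function, it is an even, smooth function of $r$, and the quotient $\phi(r)/r$ extends to a smooth even function with $\phi(r)/r\to g(0)/n$ as $r\to 0$; hence $\Phi_\mu=(\phi(r)/r)\,x$ is smooth on all of $\R^n$. Thus the reduction to the scalar equation is routine, but pinning down a globally smooth field hinges on the regularity of the Kummer function at $t=0$ and on the removable-singularity analysis of the radial primitive at the origin.
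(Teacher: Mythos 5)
Your proof is correct, and at its core it takes the same route as the paper: reduce \eqref{phi} to a scalar second-order equation for the divergence, pass to a radial profile, and recognise Kummer's equation with parameters $a=-\mu$, $b=\tfrac{n}{2}$. The packaging differs in two ways. First, you work with $g=\nabla\cdot\Phi_\mu$ itself and obtain the Fokker--Planck-type equation $\Delta g+\tfrac12\la x,\nabla g\ra+\left(\tfrac{n}{2}+\mu\right)g=0$, whereas the paper sets $Q_\mu=\nabla\cdot\Phi_\mu/\rho$ and arrives at the Ornstein--Uhlenbeck eigenequation $\Lop Q_\mu+\mu Q_\mu=0$; the two are equivalent via $g=\rho Q_\mu$, and your solution $g=\rho\, M\left(-\mu,\tfrac{n}{2},\tfrac{r^2}{4}\right)$ is exactly the paper's up to the harmless normalisation $Q_\mu(0)=n$. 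Second, and more substantively, you reconstruct the field by directly integrating $(r^{n-1}\phi)'=r^{n-1}g$, which is more elementary --- it needs no Kummer identities, only the removable-singularity check at the origin, which you handle correctly since $g$ is a smooth function of $r^2$ --- but it leaves $\Phi_\mu$ in integral form. The paper instead posits $\Phi_\mu=\rho\nabla\psi_\mu$ with $\Lop\psi_\mu+\mu\psi_\mu$ constant and invokes the contiguous relations \eqref{A.4} to land on the closed form $\Phi_\mu=x\,M\left(1-\mu,\tfrac{n}{2}+1,\tfrac{|x|^2}{4}\right)\rho$. That explicit formula is not cosmetic: Propositions \ref{A+} and \ref{prop:main4} refer to ``the vector field derived in the proof of Proposition \ref{prop:main2}'' and rely on the closed form of $\sigma_\mu$ for sign and asymptotic information (via \eqref{sigma'}, \eqref{sigma'+} and \eqref{A.5}). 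So your argument fully proves the proposition as stated, but to feed the rest of the paper you would still need to evaluate your radial primitive in closed form --- which is precisely the paper's compact formula $\dashint_{B_r}M\left(-\mu,\tfrac{n}{2},\tfrac{|y|^2}{4}\right)\rho\,\d y=M\left(1-\mu,\tfrac{n}{2}+1,\tfrac{r^2}{4}\right)\rho$.
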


\begin{prop}\label{A+}
Let  $\Phi_\mu$ be the vector field derived in the proof of Proposition \ref{prop:main2}. Then the matrix field $\A_\mu$ as defined by \eqref{eq:A} is: 

\begin{enumerate}[a)]
\item  positive definite on $\dot\R^n$ when $\mu\in\left[-\frac{n}{2},1\right]$,
\item  negative definite on $\dot\R^n$ when $p=p_S$, $n\geq 4$ and $\mu \in \left[\frac{3n}{2(n-1)},2\right]$,
\item neither positive nor negative semidefinite on $\R^n$ otherwise. 
\end{enumerate}
\end{prop}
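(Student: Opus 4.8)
The plan is to exploit the radial symmetry of the vector field produced in Proposition \ref{prop:main2}. I expect that construction to yield a radial field, $\Phi_\mu=\phi(r)\nu$ with $\nu=x/r$, equivalently a radial gradient $\Phi_\mu=\rho\nabla\Theta$, so the first step is to diagonalise \eqref{eq:A} in the moving frame adapted to $\nu$. A direct computation gives
\[
\A_\mu=\rho\Bigl(g'-\tfrac{g}{r}\Bigr)\,\nu\otimes\nu+\Bigl[\rho\tfrac{g}{r}-c\,\nabla\cdot\Phi_\mu\Bigr]\Id,\qquad g=\phi/\rho,\quad c=\tfrac12-\tfrac1{p+1},
\]
which is symmetric and block-diagonal: $\A_\mu$ has a simple radial eigenvalue $\alpha(r)$ (eigenvector $\nu$) and a tangential eigenvalue $\beta(r)$ of multiplicity $n-1$ (eigenspace $\nu^{\perp}$). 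Consequently definiteness of $\A_\mu$ on $\dot\R^n$ is exactly the pair of scalar sign conditions $\alpha(r)\gtrless 0$ and $\beta(r)\gtrless 0$ for all $r>0$. Using the divergence relation \eqref{phi} built into $\Phi_\mu$, namely $\nabla\cdot\Phi_\mu=\rho\,\mathcal{L}\Theta$ with $\mathcal{L}\Theta+\mu\Theta$ constant, these collapse to the clean forms
\[
\alpha=\rho\bigl(\Theta''-c\,\mathcal{L}\Theta\bigr),\qquad \beta=\rho\Bigl(\tfrac{\Theta'}{r}-c\,\mathcal{L}\Theta\Bigr).
\]

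Next I would identify the profile. Under $s=r^2/4$ the equation $\mathcal{L}\Theta+\mu\Theta=\mathrm{const}$ becomes Kummer's equation with parameters $a=-\mu$, $b=n/2$, so $\Theta$ is, up to scaling and an irrelevant additive constant, the confluent hypergeometric function $M(-\mu,n/2,r^2/4)$, which is smooth by its power series. Substituting into the expressions for $\alpha,\beta$ turns the whole proposition into a question about the sign, for $r>0$, of two explicit combinations of $M$ and its contiguous relatives $M(1-\mu,n/2+1,\cdot)$, with the orientation of $\Phi_\mu$ fixed so as to make $\A_\mu$ positive (resp. negative) in the regime sought.

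The thresholds in a)–c) should then emerge from boundary behaviour. At the origin both eigenvalues coincide, $\alpha(0)=\beta(0)=(1-cn)\Theta''(0)$, and since $1-cn>0$ iff $p<p_S$ while $1-cn=0$ iff $p=p_S$, this splits the subcritical and critical cases; at $p=p_S$ one passes to the next order, whose coefficient is proportional to $\mu\bigl(2\mu(n-1)-3n\bigr)$ and changes sign precisely at $\mu=\tfrac{3n}{2(n-1)}$ — the lower endpoint in b), whose interval $\bigl[\tfrac{3n}{2(n-1)},2\bigr]$ is nonempty exactly when $n\geq4$. As $r\to\infty$ the Kummer asymptotics attach to $\beta$ the factor $\tfrac12+c\mu$, which at $p=p_S$ vanishes at $\mu=-\tfrac n2$, the lower endpoint in a); the upper endpoints $\mu=1,2$ are the integer values where $M(-\mu,\cdot,\cdot)$ degenerates to a polynomial and the large-$r$ growth changes character. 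For part c) these same computations are used in reverse: outside the stated $\mu$-ranges they exhibit a sign mismatch either between $r=0$ and $r=\infty$, or between $\alpha$ and $\beta$, which forces indefiniteness.

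The hard part will be upgrading this endpoint information to a sign that is \emph{uniform} in $r\in(0,\infty)$, i.e.\ excluding interior zeros of the eigenvalues across the whole claimed interval. This is where the facts on Kummer functions in the Appendix must do the work — positivity, monotonicity in $r$, the recurrence/contiguous relations that reduce each eigenvalue to a single $M$ with shifted parameters, and Tur\'an- or log-convexity-type inequalities. I expect the positive case a) to follow from monotonicity together with the one-signed endpoints, whereas the critical case b), where the decisive eigenvalue vanishes to higher order at the origin, will demand the sharpest of these inequalities and is the delicate point. It is worth noting, moreover, that the Liouville application in regime b) concerns radial $w$, for which $\nabla w=w'\nu$ and hence $\langle\nabla w,\A_\mu\nabla w\rangle=(w')^2\alpha$; thus the decisive quantity there is the radial eigenvalue $\alpha$, and controlling its sign on all of $(0,\infty)$ is what that case ultimately turns on.
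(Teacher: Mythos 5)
For parts a) and b) your setup coincides with the paper's: the same diagonalisation of $\A_\mu$ into a tangential eigenvalue (the paper's $I_\mu$) and a radial one ($\Pi_\mu=I_\mu+J_\mu$), the same identification of the profile through Kummer's equation, the same origin/infinity threshold computations, and your closing remark that for radial $w$ only the radial eigenvalue matters is exactly how b) is exploited. One correction of emphasis: the uniform-sign step you flag as the hard part needs no Tur\'an- or log-convexity-type inequalities. The contiguous relations \eqref{A.4} rewrite $\sigma_\mu$, $-r\sigma_\mu'$ and $\sigma_\mu'+\frac{r}{2}\sigma_\mu$ as \emph{single} Kummer functions with explicit prefactors, and for $\mu\in\left[-\frac n2,1\right]$ all first parameters are nonnegative, so part a) follows in one stroke from the elementary fact that $M(a,b,\cdot)>0$ when $a,b\geq 0$. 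For b), at $p=p_S$ the $\sigma_\mu$-term of $I_\mu$ drops and $\Pi_\lambda$ becomes $\frac{r^2}{2}\rho$ times a bracket which is nonpositive at $r=0$ precisely when $\lambda\geq\frac{3n}{2(n-1)}$ and whose derivative is $-(\lambda-1)$ times a positive combination of two Kummer functions for $\lambda\in(1,2]$; so it starts nonpositive and decreases. Your anticipated toolbox (contiguous relations plus positivity and monotonicity) is thus the right one, and the argument is easier than you fear.

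The genuine gap is in part c). Your plan is to detect indefiniteness from a sign mismatch between $r=0$ and $r\to\infty$ (or between the two eigenvalues), but this fails on large parameter sets, because the asymptotic sign of the radial eigenvalue \emph{alternates} in $\lambda$: by \eqref{A.5} it is the sign of $(1-\lambda)/\Gamma(2-\lambda)$, i.e.\,negative on $(1,2]\cup(3,4]\cup\dots$ and positive on $(2,3]\cup(4,5]\cup\dots$ (Lemma \ref{asymptotic}). Consequently, for instance, for $n\geq 4$ and $\lambda\in(3,4]$ the radial eigenvalue is negative at the origin (since $\lambda>\frac{3n}{2(n-1)}$) \emph{and} negative at infinity, while for $n=1,2$ and $\lambda\in(2,3]$ it is positive at both ends; endpoint data alone is there perfectly compatible with semidefiniteness, and you offer no computation substantiating the fallback mismatch between $\alpha$ and $\beta$. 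The paper closes exactly this hole with a Sturmian comparison argument based on the Picone identity: interlacing the first positive roots of the contiguous functions $u_1=M\left(2-\lambda,\frac n2+2\right)$, $u_2=M\left(1-\lambda,\frac n2+2\right)$, $u_3=M\left(1-\lambda,\frac n2+1\right)$, it shows (Lemma \ref{picone}) that for $\lambda>2$ the radial eigenvalue is negative at the first root $\iota$ of $u_2$ (where the sign is read off from $u_1(\iota)>0$ and $u_3(\iota)<0$), and for $n\geq 3$, $\lambda>3$ positive at a further root of $u_2$ trapped between the first two roots of $u_1$. Some such interior-zero mechanism is indispensable; without it your part c) cannot be completed.
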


\begin{prop}\label{prop:main4}
Let $\lambda\geq 0$, $\Phi_\lambda$ be the associated vector field derived in the proof of Proposition \ref{prop:main2}, $w$ be a bounded entire solution of \eqref{eq:eq} and $\eta_\lambda$ be defined via \eqref{eta}. Then, there exists a monotone sequence $r_k$ of radii such that $ \int_{\partial B_{r_k}}\eta_{\lambda}\to 0$ as $r_k\to \infty$.
\end{prop}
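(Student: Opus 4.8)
The plan is to estimate the boundary integrand \eqref{eta} directly and to show that its integral over spheres vanishes along a suitable sequence of radii. Every term in \eqref{eta} carries one factor drawn from $\Phi_\lambda$, $\nabla\cdot\Phi_\lambda$ or $a$, and is otherwise at most quadratic in $w$ and $\nabla w$. From the explicit construction behind Proposition \ref{prop:main2} together with the Kummer asymptotics collected in the Appendix, each of these three coefficients is a polynomially growing factor times the Gaussian weight; that is, there is a polynomial $P$ with $|\Phi_\lambda|+|\nabla\cdot\Phi_\lambda|+|a|\leq P(|x|)\rho$ on $\R^n$. Writing $M:=\sup_{\R^n}|w|$ and disposing of the cross term $\la\nabla w,\nu\ra w\,\nabla\cdot\Phi_\lambda$ by Young's inequality, one obtains the pointwise bound $|\eta_\lambda|\leq C\,P(|x|)\,\rho\,(|\nabla w|^2+1)$, so it suffices to control $\int_{\partial B_r}P(|x|)\rho\,(|\nabla w|^2+1)$.

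The analytic core is the weighted energy bound $\int_{\R^n}P(|x|)|\nabla w|^2\rho<\infty$. To prove it I would test \eqref{eq:eq} with $\zeta w\rho$ over $B_R$, taking $\zeta=(1+|x|^2)^m$ with $2m\geq\deg P$. Integrating $(\Delta w)\zeta w\rho$ by parts, the drift $-\tfrac12\la x,\nabla w\ra$ cancels exactly against the contribution of $\nabla\rho=-\tfrac12 x\rho$ --- the cancellation peculiar to the Ornstein--Uhlenbeck weight. Young's inequality absorbs $\int_{B_R}w\la\nabla\zeta,\nabla w\ra\rho$ into $\tfrac12\int_{B_R}|\nabla w|^2\zeta\rho$ at the cost of the finite quantity $\tfrac{M^2}{2}\int_{\R^n}|\nabla\zeta|^2\zeta^{-1}\rho$, while the zeroth-order and nonlinear terms are uniformly bounded by $C\int_{\R^n}\zeta\rho<\infty$ since $w$ is bounded. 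What remains is $E(R):=\int_{B_R}|\nabla w|^2\zeta\rho\leq C+2b(R)$, with the single boundary term $b(R)=\int_{\partial B_R}\la\nabla w,\nu\ra w\,\zeta\rho$.

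This boundary term is handled by an absorption argument. Cauchy--Schwarz gives $|b(R)|\leq M\sqrt{E'(R)}\,\sqrt{\Theta(R)}$, where $\Theta(R)=\int_{\partial B_R}\zeta\rho=\omega_{n-1}\zeta(R)R^{n-1}\e^{-R^2/4}\to0$ and $\int^{\infty}\Theta^{-1}\d r=+\infty$. Since $E$ is finite for each $R$ and nondecreasing, if $E$ ever exceeded $2C$ it would satisfy $E'\geq E^2/(16M^2\Theta)$ thereafter, and integrating $E'/E^2\geq(16M^2\Theta)^{-1}$ would give $1/E(R_0)\geq(16M^2)^{-1}\int_{R_0}^{R}\Theta^{-1}\d r\to+\infty$, a contradiction. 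Hence $E$ stays bounded and $\int_{\R^n}P(|x|)|\nabla w|^2\rho<\infty$.

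It then remains to extract the sequence. With $\phi(r):=\int_{\partial B_r}|\nabla w|^2\rho$, the co-area formula yields $\int_0^{\infty}P(r)\phi(r)\d r=\int_{\R^n}P(|x|)|\nabla w|^2\rho<\infty$, so $P\phi\in L^1(0,\infty)$ and $\liminf_{r\to\infty}P(r)\phi(r)=0$; pick a monotone $r_k\to\infty$ along which it vanishes. The remaining, purely algebraic, part of $\int_{\partial B_r}|\eta_\lambda|$ is dominated by $P(r)\int_{\partial B_r}\rho=\omega_{n-1}P(r)r^{n-1}\e^{-r^2/4}$, which tends to $0$ for every $r$. Combining the two, $\int_{\partial B_{r_k}}|\eta_\lambda|\to0$, which is the claim. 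The main obstacle --- and the very reason one obtains only a subsequence --- is this weighted energy bound: for $\lambda\neq1$ the uniform derivative bounds that Giga and Kohn inherit from the parabolic similarity structure are unavailable, so the ODE-absorption argument yields only the integral statement $P\phi\in L^1$, hence $\liminf$ rather than a genuine limit. The hypothesis $\lambda\geq0$ is used only through the construction and asymptotics of $\Phi_\lambda$.
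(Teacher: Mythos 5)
Your argument breaks at its first step, and the error is fatal to everything that follows: the claimed bound $|\Phi_\lambda|+|\nabla\cdot\Phi_\lambda|+|a|\leq P(|x|)\rho$ is false for general $\lambda\geq 0$. By the construction in Proposition \ref{prop:main2}, $\Phi_\lambda=x\,\sigma_\lambda$ with $\sigma_\lambda=M\left(1-\lambda,\frac{n}{2}+1,\frac{r^2}{4}\right)\rho$, and by the asymptotics \eqref{A.5}, whenever $1-\lambda$ is not a nonpositive integer (i.e.\,for every $\lambda\geq 0$ except $\lambda=1,2,3,\dots$) the Kummer factor grows like $\e^{r^2/4}$ times a negative power of $r$: the Gaussian is cancelled \emph{exactly}, and $\sigma_\lambda\sim C\,r^{-n-2\lambda}$ decays only polynomially. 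This is precisely the content of the estimate $r^n\sigma_\lambda=\mathcal{O}\left(r^{-2\lambda}\right)$ used in the paper; in particular $\sigma_0=\mathcal{O}\left(r^{-n}\right)$, not $\mathcal{O}(P(r)\rho)$. Consequently your pointwise bound $|\eta_\lambda|\leq C\,P(|x|)\rho\left(|\nabla w|^2+1\right)$ fails, and the Gaussian-weighted energy bound $\int_{\R^n}P(|x|)|\nabla w|^2\rho<\infty$ --- even granting your absorption argument, which is plausible in itself --- is far too weak: the boundary integrand carries the weight $\sigma_\lambda\sim r^{-n-2\lambda}$, which exceeds $P(r)\rho(r)$ by a factor of order $\e^{r^2/4}$. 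Your coarea extraction yields $\liminf_{r\to\infty}P(r)\,\e^{-r^2/4}\int_{\partial B_r}|\nabla w|^2=0$, which says nothing about $r^{-n-2\lambda}\int_{\partial B_r}|\nabla w|^2$. What is actually needed, and what the paper proves for $\lambda>0$ (by testing \eqref{eq:main} with $w$, a mean value argument on $h(r)=\frac{1}{2}\dashint_{\partial B_r}|w|^2$, and a pigeonhole step), is uniform boundedness of the \emph{unweighted} spherical means $\dashint_{\partial B_{r_k}}|\nabla w|^2$ along a sequence; only then does $\int_{\partial B_{r_k}}\eta_\lambda=\mathcal{O}\left(r_k^{-2\lambda}\right)\to 0$.

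Even after this repair your scheme still misses the endpoint $\lambda=0$, which is the genuinely delicate case. There the decay factor $r^{-2\lambda}$ degenerates to $\mathcal{O}(1)$, and the zeroth-order part of the boundary term is only bounded, not vanishing --- contrary to your claim that it is dominated by $P(r)\,r^{n-1}\e^{-r^2/4}$, which again rests on the false Gaussian decay of $\Phi_0$. Bounded spherical means no longer suffice: one needs $\dashint_{\partial B_{r_k}}\left(|\nabla w|^2+|w|^2+|w|^{p+1}\right)\to 0$. The paper obtains this by a separate argument: testing \eqref{eq:main} with $w\sigma_0$, using the sign $\sigma_0'\leq 0$ and the positive definiteness of $\A_0$ from Proposition \ref{A+}\emph{a)} to conclude $\int_{\R^n}\left(|\nabla w|^2+|w|^2+|w|^{p+1}\right)\sigma_0<\infty$ with $\sigma_0\approx r^{-n}$, whence by coarea there is a sequence of spheres on which the means themselves vanish. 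Nothing in your proposal produces this decay at $\lambda=0$; note also that the polynomial-decay weight $\sigma_0\approx r^{-n}$ is exactly calibrated so that $s^{n-1}\sigma_0(s)\approx s^{-1}$ is non-integrable, which is what forces the means to vanish along a subsequence --- a mechanism unavailable with the Gaussian weight you work with.
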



We are now ready to prove our main result. 
\begin{proof}[Proof of the Liouville Theorem]
The argument is a straightforward consequence of the propositions above with some details depending on the sign of $\lambda$. 

First, fix $\lambda\leq 1$ and apply Proposition \ref{prop:main2} with $\mu=\lambda$ to define the vector field $\Phi$ according to 
 \[
 \Phi:=\begin{cases}\Phi_\lambda\quad \mbox{ for }\quad \lambda\geq 0,\\
 \Phi_0\quad \mbox{ for }\quad \lambda\leq 0, 
 \end{cases}
 \]
 along with the matrix field $\A$, the vector field $a$ and the function $\eta$ defined in Proposition \ref{prop:main1}. 
\subsubsection*{\bf Case of $\lambda\in [0,1]$:}
The end point case, $\lambda=1$, is of course due to \cite{GIGAKOHN1985} and the below procedure recovers this classical result. Suppose $\Omega$ is a ball of radius $r>0$ centred at the origin.  With our choice of $\Phi$ the identity \eqref{id} reduces to  
\[
\int_{B_r} \la\nabla w, \A\nabla w\ra=\int_{\partial B_r}\eta
\]
for every $r>0$.  Moreover, by Proposition \ref{A+}$a)$, $\A(x)$ is positive definite for every $x\in \dot\R^n$. According to Proposition \ref{prop:main4} we may select a sequence of spheres $\partial B_{r_k}$ along which the boundary integral vanishes asymptotically. Thus, by the monotone convergence theorem, we may pass to the limit
\[
\int_{\R^n} \la\nabla w, \A\nabla w\ra=\lim_{k\to \infty}\int_{B_{r_k}} \la\nabla w, \A\nabla w\ra=0.
\]
This in turn forces $\nabla w\equiv 0$ everywhere, i.e. $w$ must be constant on $\R^n$.

\subsubsection*{\bf Case of $\lambda<  0$:}
When $\lambda$ passes zero the three constant solutions of \eqref{eq:eq} collapse to one null solution. As we will see in the proof of Proposition \ref{prop:main2} we find that $\nabla \cdot \Phi_0=n\rho$, which is strictly positive. The matrix field $\A$ and the boundary integral retain their properties but the quadratic term in \eqref{id} does not vanish in this case. Instead we have 
 \[
  \nabla \cdot a=\frac{\lambda}{2(p+1)}\nabla \cdot \Phi_0=-\frac{n|\lambda|}{2(p+1)}\rho,
 \]
 so that 
 \[
\int_{\R^n} \la\nabla w, \A\nabla w\ra+\frac{n|\lambda|}{p+1}\int_{\R^n} |w|^2\rho =0.
\]
Thus $w\equiv 0$ on $\R^n$ as required. 
\subsubsection*{\bf Case of $p=p_S$, $n\geq 4$ and $\lambda\in \left[\frac{3n}{2(n-1)},2\right]$:}We follow the same steps as in the case of $\lambda\in [0,1]$ ensuring that $\nabla\cdot a\equiv 0$ on $\R^n$ and that the boundary term vanishes asymptotically. By Proposition \ref{A+}$b)$, with these parameters, the matrix field $\A$ restricted to radial fields is negative definite outside the origin. Thus, for a radial solution $w$ we may still appeal to the monotone convergence theorem to conclude that it has to be constant. 
\end{proof}

\section{Proofs of Propositions \ref{prop:main1} through \ref{prop:main4}}
We will demonstrate arguments for Propositions \ref{prop:main1}, \ref{prop:main2}, \ref{A+}{\it a)} and \ref{prop:main4} first, followed by Proposition \ref{A+} {\it b)} and {\it c)} which require some preparation first. 

It will be convenient to rewrite \eqref{eq:eq} as
\begin{equation}\label{eq:main}
\Lop w+f_\lambda(w)=0 \quad \mbox{on}\quad \R^n,
\end{equation}
with 
\[
f_\lambda(s)=-\frac{\lambda}{p-1} s+|s|^{p-1}s. 
\]


\begin{proof}[Proof of Proposition \ref{prop:main1}]

Let $\Phi$ be a smooth vector field on $\R^n$. We will multiply \eqref{eq:main} through $\la \nabla w,\Phi\ra $ and integrate over $\Omega$. Since 
\[
\la \nabla w,  \la\nabla^2 w,\Phi\ra\ra=\frac{1}{2}\la \nabla |\nabla w|^2,\Phi\ra=\frac{1}{2}\nabla \cdot \bigg(|\nabla w|^2\Phi\bigg)-\frac{1}{2}|\nabla w|^2\nabla \cdot \Phi,
\]
by a repeated application of the divergence theorem we get
\begin{align*}
\int_{\Omega}\mathcal L w \langle \nabla w,\Phi\rangle&=\int_{\Omega}\nabla \cdot\left(\rho\nabla w\right)\left\langle \nabla w,\frac{\Phi}{\rho}\right\rangle=\int_{\partial \Omega}\langle \nabla w,\nu\rangle\langle \nabla w,\Phi\rangle-\int_{\Omega}\la\nabla w,\nabla \left\langle \nabla w,\frac{\Phi}{\rho}\right\rangle\ra \rho\\
&=\int_{\partial \Omega}\langle \nabla w,\nu\rangle\langle \nabla w,\Phi\rangle-\int_{\Omega}\la \nabla w,  \la\nabla^2 w,\Phi\ra\ra-\int_{\Omega}\la \nabla w, \la\rho\nabla \frac{\Phi}{\rho}, \nabla w\ra\ra\\
&=\int_{\partial \Omega}\langle \nabla w,\nu\rangle\langle \nabla w,\Phi\rangle-\frac{1}{2}|\nabla w|^2\langle \Phi,\nu\rangle+\int_{\Omega} \frac{1}{2} |\nabla w|^2\nabla\cdot\Phi-\la \nabla w, \la\rho\nabla \frac{\Phi}{\rho}, \nabla w\ra\ra
\end{align*}
and 
\[
\int_{\Omega}f_\lambda(w)\la \nabla w,\Phi\ra=\int_{\partial \Omega}
\Pi_\lambda(w)\la \Phi,\nu\ra-\int_{\Omega}\Pi_\lambda(w)\nabla \cdot \Phi,
\]
where $\Pi_\lambda$ is the primitive function of $f_\lambda$ given by
\[
\Pi_\lambda(s)=-\frac{\lambda}{2(p-1)}|s|^2+\frac{1}{p+1}|s|^{p+1}.
\]
Thus, we obtain our first integral identity 
\begin{align}\label{A}\tag{A}
\begin{split}
0=&\frac{1}{2}\int_{\Omega}  |\nabla w|^2\nabla\cdot\Phi-\int_{\Omega}\la \nabla w, \la\rho\nabla \frac{\Phi}{\rho}, \nabla w\ra\ra-\int_{\Omega}\Pi_\lambda(w)\nabla \cdot \Phi\\
&+\int_{\partial \Omega}\langle \nabla w,\nu\rangle\langle \nabla w,\Phi\rangle-\frac{1}{2}\int_{\partial \Omega}|\nabla w|^2\langle \Phi,\nu\rangle+\int_{\partial \Omega}
\Pi_\lambda(w)\la \Phi,\nu\ra.
\end{split}
\end{align}
Next, we will test \eqref{eq:main} with  $w\nabla \cdot\Phi$. Since 
\begin{align*}
\int_{\Omega}\mathcal L w \left(w\nabla \cdot \Phi\right)=&\int_{\partial \Omega}\la\nabla w,\nu\ra w\nabla \cdot\Phi-\int_{\Omega}|\nabla w|^2\nabla \cdot\Phi-\int_{\Omega}\la w\nabla w ,\rho\nabla\frac{\nabla \cdot\Phi}{\rho}\ra\\
=&\int_{\partial \Omega}\la\nabla w,\nu\ra w\nabla \cdot\Phi-\frac{1}{2}|w|^2\la \nabla\frac{\nabla \cdot\Phi}{\rho},\nu\ra\rho\\
&+\frac{1}{2}\int_{\Omega}|w|^2\nabla\cdot \left(\rho\nabla\frac{\nabla \cdot\Phi}{\rho}\right)-|\nabla w|^2\nabla \cdot\Phi
\end{align*}
we obtain the second integral identity 
\begin{align}\label{B}\tag{B}
\begin{split}
0=&-\int_{\Omega}|\nabla w|^2\nabla \cdot\Phi+\frac{1}{2}\int_{\Omega}|w|^2\nabla\cdot \left(\rho\nabla\frac{\nabla \cdot\Phi}{\rho}\right)+\int_{\Omega}f_\lambda(w)w\nabla \cdot\Phi\\
&+\int_{\partial \Omega}\la\nabla w,\nu\ra w\nabla \cdot\Phi-\frac{1}{2}\int_{\partial \Omega}|w|^2\la \nabla\frac{\nabla \cdot\Phi}{\rho},\nu\ra\rho. 
\end{split}
\end{align}
Note, that the following relation holds
\[
\frac{1}{p+1}f_\lambda(s) s-\Pi_\lambda(s)=\frac{\lambda}{2(p+1)}|s|^2
\]
and so we can combine \eqref{A} and \eqref{B} in such a way that integrals over $\Omega$ containing $|w|^{p+1}$ cancel out

\[
\eqref{A}+\frac{1}{p+1}\eqref{B}=0.
\]
This linear combination amounts to \eqref{id}.

\end{proof}

\begin{proof}[Proof of Proposition \ref{prop:main2}]
Set $Q_\mu=\frac{\nabla \cdot \Phi_\mu}{\rho}$, then the equation \eqref{phi} may be rewritten as 

\begin{equation}\label{eigen}
\Lop Q_\mu+\mu Q_\mu=0. 
\end{equation}
There is no need to approach this problem in any degree of generality as long as we can point to any solution that matches our requirements. The rotational symmetry of the problem strongly suggests that we may restrict our attention to \emph{radial fields}, i.e.\,fields of the form $\Phi_\mu(x):=x\sigma_\mu(r)$ for some smooth function $\sigma_\mu:[0,\infty)\mapsto \R$.  Then, we may also treat $Q_\mu$ as a function of the radius in which case  \eqref{eigen} reads
\begin{equation}\label{radeigen}
Q_\mu''+\left(\frac{n-1}{r}-\frac{r}{2}\right)Q_\mu'+\mu Q_\mu=0.
\end{equation}
Aiming at a smooth solution we equip the above with the requirement that  $Q_\mu'(0)=0$. We also set $Q_\mu(0)=n$. Other choices of the initial value are equivalent but this one makes calculations clearer and is immediately compatible with the fully worked out case $\mu=1$. Of course \eqref{eigen} brings to mind an eigenproblem for the Ornstein-Uhlenbeck operator and indeed a relevant theory may be developed in the setting of the weighted Lebesgue space $L^2_{\rho}(\R^n)$, see \cite{FILIPPASKOHN}. Analysis along these lines would force us to deal with a discrete spectrum instead of arbitrary $\mu$. However, there is no need to impose such integrability restrictions and without them solutions of \eqref{radeigen} are expressed in terms of the well known Kummer functions,
\begin{equation}\label{Q}
Q_\mu(r)=n M\left(-\mu,\frac{n}{2},\frac{r^2}{4}\right), 
\end{equation}
 i.e.\,confluent hypergeometric functions of the first kind. Their definition along with a small selection of their multiple special properties are provided in the Appendix. 
 
In order to have an exact form of the vector field $\Phi_\mu$ we need an expression for $\sigma_\mu$. We will show that 
\begin{equation}\label{sigma}
\sigma_\mu=M\left(1-\mu,\frac{n}{2}+1,\frac{r^2}{4}\right)\rho.
\end{equation}

Let us look again at the required condition $\nabla \cdot a_\mu\equiv 0$. If we postulate an alternative expression for the vector field, $\Phi_\mu=\rho\nabla \psi_\mu$ for some smooth radial function $\psi_\mu$, then condition \eqref{eigen} takes the form 
\[
\nabla \cdot\left(\rho\nabla\frac{\nabla \cdot\rho\nabla \psi_\mu}{\rho}+\mu\rho\nabla \psi_\mu\right)=\rho \Lop\left(\Lop \psi_\mu+\mu\psi_\mu\right)=0
\]
which is satisfied if we simply demand 
\[
Q_\mu+\mu\psi_\mu=\Lop \psi_\mu+\mu\psi_\mu \equiv const. 
\]
Taking cue from  the case $\mu=1$, where $\psi_1(r)=\frac{r^2}{2}$ we will choose the constant to be equal to the dimension and ask for $\psi_\mu(0)=0$. Then, a solution suitable for our purposes and consistent with $Q_\mu=n-\mu \psi_\mu$ is given by  
\[
\psi_\mu(r)=\frac{n}{\mu}-\frac{n}{\mu}M\left(-\mu,\frac{n}{2},\frac{r^2}{4}\right).
\]
Since $\psi_\mu$ is radial we can write 
\[
\rho\nabla \psi_\mu=\rho\psi_\mu'\frac{x}{r}=x\left(\frac{\psi_\mu'}{r}\rho\right)=:x\sigma_\mu . 
\]
Due to special identities satisfied by Kummer functions -- see \eqref{A.4} -- we arrive at
\[
\sigma_\mu=M\left(1-\mu,\frac{n}{2}+1,\frac{r^2}{4}\right)\rho,
\]
as required. As a side comment we note that since  
\begin{equation}\label{eq:div}
\nabla \cdot \Phi_\mu=n \sigma_\mu+r\sigma_\mu'=n M\left(-\mu,\frac{n}{2},\frac{r^2}{4}\right)\rho
\end{equation}
the following compact formula  follows 
\[
\dashint_{B_r}M\left(-\mu,\frac{n}{2},\frac{|y|^2}{4}\right)\rho\d y=M\left(1-\mu,\frac{n}{2}+1,\frac{r^2}{4}\right)\rho.
\]
Hence, we have established, that the vector field given by 
\[
\Phi_\mu(x)=x M\left(1-\mu,\frac{n}{2}+1,\frac{|x|^2}{4}\right)\rho
\]  
possesses the desired properties. 
\end{proof}

\begin{proof}[Proof of Proposition \ref{A+}a)]
By direct calculation we obtain 
 \begin{align*}
\rho\nabla \frac{\Phi_\mu}{\rho}&=\rho\nabla \left(x\frac{\sigma_\mu}{\rho}\right)=\sigma_\mu \Id+\rho \left(x\otimes\nabla \frac{\sigma_\mu}{\rho}\right)=\sigma_\mu \Id+r\rho \left(\frac{\sigma_\mu}{\rho}\right)'\nu\otimes \nu\\
&=\sigma_\mu \Id+r\left(\sigma_\mu'+\frac{r}{2}\sigma_\mu\right)\nu\otimes \nu,
\end{align*}
where $\nu=\frac{x}{r}$ is the exterior normal field to the sphere $\partial B_r$ and
\[
\nu\otimes \nu=\frac{1}{r^2}\begin{pmatrix}
x_1^2 & x_1 x_2 & x_1 x_3 & \cdots & x_1 x_n \\
x_2 x_1& x_2^2 & x_2 x_3 & \cdots & x_2 x_n \\
x_3 x_1& x_3 x_2& x_3^2 & \cdots & x_3 x_n \\
\vdots & \vdots & \vdots & \ddots & \vdots \\
x_n x_1 & x_n x_2& x_n x_3 & \cdots & x_n^2 \end{pmatrix}
\]
 is the outer product of $\nu$ with itself.  It follows after a rearrangement of terms and taking \eqref{eq:div} into account,  that for an arbitrary vector $\alpha\in \R^n$ we have 
\begin{equation}\label{eq:Asigma}
\left\langle \alpha,\A_\mu\alpha\right\rangle=|\alpha|^2 I_\mu+|\langle \alpha,\nu\rangle|^2J_\mu,
\end{equation}
with 
\begin{equation}\label{IJ}
I_\mu=\left(\frac{n}{p+1}-\frac{n-2}{2}\right) \sigma_\mu-\left(\frac{1}{2}-\frac{1}{p+1}\right)r\sigma_\mu' \quad\mbox{ and }\quad  J_\mu=r\left( \sigma_\mu'+\frac{r}{2}\sigma_\mu\right).
\end{equation}
We will now use properties of Kummer functions to argue that for $1<p\leq p_S$ and $\mu\in [0,1]$ functions $I_\mu$ and $J_\mu$ are strictly positive for all $r\geq 0$. Note, that $p\leq p_S$ simply means that the coefficient in front of $\sigma_\mu$ in $I_\mu$ is nonnegative. With the use of special identities \eqref{A.4} we get 
\begin{equation}\label{sigma'}
-r\sigma_\mu'=\frac{\frac{n}{2}+\mu}{\frac{n}{2}+1}M\left(\frac{n}{2}+\mu+1,\frac{n}{2}+2,-\frac{r^2}{4}\right)\frac{r^2}{2}
=\frac{\frac{n}{2}+\mu}{\frac{n}{2}+1}M\left(1-\mu,\frac{n}{2}+2,\frac{r^2}{4}\right)\frac{r^2}{2}\rho.
\end{equation}
and further
\begin{equation}\label{sigma'+}
\begin{split}
 \sigma_\mu'+\frac{r}{2}\sigma_\mu&=\frac{r}{2}\left[M\left(1-\mu,\frac{n}{2}+1,\frac{r^2}{4}\right)-\frac{\frac{n}{2}+\mu}{\frac{n}{2}+1}M\left(1-\mu,\frac{n}{2}+2,\frac{r^2}{4}\right)\right]\rho\\
&=\frac{r}{2}\frac{1-\mu}{\frac{n}{2}+1}M\left(2-\mu,\frac{n}{2}+2,\frac{r^2}{4}\right)\rho. 
\end{split}
\end{equation}
Since, $M\left(a,b,\frac{r^2}{4}\right)$ is strictly positive whenever $a,b\geq 0$ we conclude that $\mu\in \left[-\frac{n}{2},1\right]$ guarantees that $\sigma_\mu, -r\sigma_\mu'$ are strictly positive and $ \sigma_\mu'+\frac{r}{2}\sigma_\mu$ is nonnegative for all $r$. Hence, $\A_\mu$ is strictly positive on $\dot \R^n$.    

\end{proof}

\begin{proof}[Proof of Proposition \ref{prop:main4}]
First we simplify the expression \eqref{eta} and rephrase it in terms of spherical means, which will be more convenient to work with. Recall \eqref{Q} and \eqref{sigma}, i.e.\,the formulae for $Q_\lambda$ and $\sigma_\lambda$. It follows that 
 \[
 \rho Q_\lambda'=- \lambda r M \left(1-\lambda,\frac{n}{2}+1,\frac{r^2}{4}\right)\rho=-\lambda r \sigma_\lambda
 \]
 and in effect  
 \begin{align*}
 \langle a_\lambda,\nu\rangle&=\frac{1}{2(p+1)}\la \rho Q_\lambda'+\lambda \sigma_\lambda x,\nu\ra=\frac{1}{2(p+1)}\left(\rho Q_\lambda' +\lambda r \sigma_\lambda \right)=0.
 \end{align*}
Hence, we may write 

\begin{align*}
 \frac{1}{|\partial B_1|}\int_{\partial B_r}\eta_\lambda=&  r^n\sigma_\lambda\dashint_{\partial B_r}|\langle \nabla w,\nu\rangle |^2-\frac{1}{2}|\nabla w|^2+\frac{1}{p+1}f_\lambda(w)w+\frac{r^{n-1}Q_\lambda \rho}{p+1} \dashint_{\partial B_r}\la\nabla w,\nu\ra w.
 \end{align*}
 By the choice of $\Phi_\lambda$ and through \eqref{id} we know that  $\int_{\partial B_r}\eta_\lambda\geq 0$. In order to get an upper bound we first apply  a straightforward estimate
\[
 \frac{1}{|\partial B_1|}\int_{\partial B_r}\eta_\lambda\leq   \frac{1}{2}r^n|\sigma_\lambda|\dashint_{\partial B_r}|\nabla w|^2+\frac{r^n|\sigma_\lambda|}{p+1}\dashint_{\partial B_r}|f_\lambda(w)w|+\frac{r^{n-1}|Q_\lambda| \rho}{2(p+1)} \dashint_{\partial B_r}|\nabla w|^2+|w|^2.
 \]
 Next, observe that due to \eqref{A.5} we have the following asymptotic behaviour 
 \begin{align*}
 r^n\sigma_\lambda&=\begin{cases}\mathcal{O}\left(r^{-2\lambda}\right)&\mbox{for }\lambda\neq 1,2,\dots,\\
 \mathcal{O}\left(r^{n-2\lambda+2}\rho\right)&\mbox{otherwise}
 \end{cases}\nonumber \\
 \mbox{ and }\label{asym}\\
  r^{n-1}Q_\lambda \rho&=\begin{cases}\mathcal{O}\left(r^{-2\lambda-1}\right)&\mbox{for }\lambda\neq 0,1,\dots,\\
 \mathcal{O}\left(r^{-1-2\lambda}\rho\right)&\mbox{otherwise}.\end{cases}\nonumber
 \end{align*}
 This means that 
 \[
\int_{\partial B_r}\eta_\lambda\leq  \mathcal{O}\left(r^{-2\lambda}\right)\dashint_{\partial B_r}|\nabla w|^2+|w|^2+|w|^{p+1}.
 \]
The bound is significantly better when $\lambda$ happens to be a positive integer but for our purposes it suffices to use this single estimate. At this point we split the argument into two cases. First we will deal with a more straightforward case of $\lambda>0$. 

 Our solution is uniformly bounded and if we could moreover assert that spherical means of $|\nabla w|^2$ remain uniformly bounded for some sequence of radii $r_k\to\infty$, then  on such a sequence we would find 
\[
 \int_{\partial B_{r_k}}\eta_\lambda\to 0\quad \mbox{ as }\quad k\to \infty.
\]
Now, by assumption, we may choose a finite $M>0$ such that
\[
\dashint_{\partial B_r}|w|^{p+1} \leq M^{p+1} 
\]
for all $r\geq 0$. Define a smooth function $h(r)=\frac{1}{2}\dashint_{\partial B_r}|w|^{2}$. Due to the Jensen inequality we have
\[
h(r)=\frac{1}{2}\left[\left(\dashint_{\partial B_r}|w|^{2}\right)^\frac{p+1}{2}\right]^\frac{2}{p+1}\leq  \frac{1}{2}\left(\dashint_{\partial B_r}|w|^{p+1}\right)^\frac{2}{p+1}\leq \frac{1}{2}M^2
\]
uniformly in $r$. Moreover
\[
h'(r)=\dashint_{\partial B_r}\langle \nabla w,\nu\rangle w,
\]
so that, by the mean value theorem, we may find a sequence of radii $r_k\in (k,k+1)$, $k=0,1,2,\dots$, such that 
\[
h'(r_k)=h(k+1)-h(k)\quad \mbox{and}\quad |h'(r_k)|\leq M^2.
\] 
Next, multiply \eqref{eq:main} by $w$ and integrate over $B_r$ to obtain
\[
\int_{B_r} |\nabla w|^2=\left(\frac{n}{4}-\frac{\lambda}{p-1}\right)\int_{B_r}|w|^2+\int_{B_r}|w|^{p+1}-\frac{r}{4}\int_{\partial B_r}|w|^2+\int_{\partial B_r}\langle \nabla w,\nu\rangle w.
\]
Since $n|B_r|=r|\partial B_r|$, there follows
\[
\dashint_{B_r} |\nabla w|^2=\left(\frac{n}{4}-\frac{\lambda}{p-1}\right)\dashint_{B_r}|w|^2+\dashint_{B_r}|w|^{p+1}-\frac{n}{4 }\dashint_{\partial B_r}|w|^2+\frac{n}{r}h'(r)
\]
and further due to 
\[
\dashint_{B_r}|w|^{p+1}=\frac{1}{|B_r|}\int_0^r \left(|\partial B_s|\dashint_{\partial B_s}|w|^{p+1}\right)  \leq M^{p+1}
\]
\Big(and analogously for $\dashint_{B_r}|w|^{2}$\Big) we find 
\[
\dashint_{B_{r}} |\nabla w|^2\leq \left(\frac{n}{4}-\frac{\lambda}{p-1}\right)M^2+M^{p+1}+\frac{n}{r}|h'(r)|.
\]
If we now consider a sequence of balls $B_{r_k}$ and take into account that $\lambda$ is positive, then we get the bound
\[
\dashint_{B_{r_k}} |\nabla w|^2\leq \left(\frac{n}{r_k}+\frac{n}{4}\right) M^2+M^{p+1}\leq N
\]
for some finite $N$ independent of $k$ or $\lambda$. Since the increments between successive radii are uniformly bounded it also follows that for  $r\in (r_k,r_{k+1})$
\begin{align*}
\dashint_{B_{r}} |\nabla w|^2=&\frac{|B_{r_{k+1}}|}{|B_{r}|}\left(\frac{1}{|B_{r_{k+1}}|}\int_{B_{r}} |\nabla w|^2\right)\leq \left(\frac{r_{k+1}}{r}\right)^n\dashint_{B_{r_{k+1}}} |\nabla w|^2\\
&\leq \left(\frac{r_{k}+2}{r_k}\right)^n\dashint_{B_{r_{k+1}}} |\nabla w|^2\leq \left(\frac{r_{0}+2}{r_0}\right)^n N=:N'.
\end{align*}
Since $N'$ does not depend on $k$ we get the uniform estimate $\dashint_{B_r} |\nabla w|^2\leq N'$. 

This implies however that there exists a further (relabelled) sequence $r_k\to \infty$ such that $\dashint_{\partial B_{r_k}}|\nabla w|^2\leq 2 N'$ uniformly in $k$. For suppose there did not exist such a sequence, i.e.\,there was a radius $R$ such that $\dashint_{\partial B_{r}}|\nabla w|^2>2N'$ for $r>R$, then 
\begin{align*}
N' &\geq \dashint_{B_r} |\nabla w|^2=\frac{1}{|B_r|}\int_0^r \left(|\partial B_s|\dashint_{\partial B_s} |\nabla w|^2\right)=\frac{n}{r^n}\int_0^r s^{n-1}\dashint_{\partial B_s} |\nabla w|^2\\
&\geq \frac{n}{r^n}\int_{R}^r s^{n-1}\dashint_{\partial B_s} |\nabla w|^2>2N'\left(1-\frac{R^n}{r^n}\right) 
\end{align*}
which is false for sufficiently large $r$. Now let $r_k\to \infty$ be the sequence implied in the above contradiction. The above argument yields 
 \begin{equation}\label{etabound}
\int_{\partial B_{r_k}}\eta_\lambda\leq  \mathcal{O}\left(r_k^{-2\lambda}\right)
 \end{equation}
which is enough as long as $\lambda>0$ but requires a refinement when $\lambda$ vanishes.

Set $\lambda=0$, multiply equation \eqref{eq:main} through $w \sigma_0$ and integrate over $B_{r_k}$. Then 
\begin{align*}
 \frac{1}{2}\int_{B_{r_k}}\nabla \cdot\left(\rho\nabla\frac{\sigma_0}{\rho}\right)|w|^2+\int_{B_{r_k}}|w|^{p+1}\sigma_0=&\int_{B_{r_k}}|\nabla w|^2\sigma_0-\int_{\partial B_{r_k}}\langle \nabla w,\nu\rangle w\sigma_0\\
 &-\frac{1}{2}\int_{\partial B_{r_k}}\left\langle\rho\nabla\frac{\sigma_0}{\rho},\nu\right\rangle|w|^2.
\end{align*}
First, let us estimate the boundary terms. To begin with 
\[
\left|\int_{\partial B_{r_k}}\langle \nabla w,\nu\rangle w\sigma_0\right|\leq \frac{1}{2}\sigma_0(r_k)\int_{\partial B_{r_k}}|\nabla w|^2+|w|^2=\mathcal{O}\left(r_k^{-1}\right)\dashint_{\partial B_{r_k}}|\nabla w|^2+|w|^2.
\]
Next, note that by  \eqref{sigma'+}
\[
\la\rho\nabla\frac{\sigma_0}{\rho},\nu\ra=\la\nabla \sigma_0+\frac{x}{2}\sigma_0,\nu\ra=\sigma_0'+\frac{r}{2}\sigma_0=\frac{r}{n+2}M\left(2,\frac{n}{2}+2,\frac{r^2}{4}\right)\rho=\mathcal{O}\left(r^{1-n}\right).
\]
Hence, 
\begin{align*}
\left|\int_{\partial B_{r_k}}\left\langle\rho\nabla\frac{\sigma_0}{\rho},\nu\right\rangle|w|^2\right|&\leq \mathcal{O}\left(r_k^{1-n}\right)\int_{\partial B_{r_k}}|w|^2=\mathcal{O}\left(1\right)\dashint_{\partial B_{r_k}}|w|^2.
\end{align*}

Observe that by \eqref{sigma} we have $\frac{\sigma_0}{\rho}=M\left(1,\frac{n}{2}+1,\frac{r^2}{4}\right)$. On the other hand this Kummer function solves the equation  

\[
\nabla\cdot\left(\rho\nabla \frac{\sigma_0}{\rho}\right)=-\frac{2 }{|x|^2}\langle \nabla \sigma_0,x\rangle+\sigma_0=-\frac{2 }{r}\sigma_0'+\sigma_0,
\]
see \eqref{A.3} in the Appendix. Since $\sigma_0'\leq 0$ we have 
\[
 \frac{1}{2}\int_{B_{r_k}}\nabla \cdot\left(\rho\nabla\frac{\sigma_0}{\rho}\right)|w|^2+\int_{B_{r_k}}|w|^{p+1}\sigma_0\geq \frac{1}{2}\int_{B_{r_k}}\left(|w|^2+|w|^{p+1}\right)\sigma_0.
\]
This combined with our estimates of the boundary terms yield
\[
\frac{1}{2}\int_{B_{r_k}}\Big(|w|^2+|w|^{p+1}\Big)\sigma_0\leq \int_{B_{r_k}}|\nabla w|^2\sigma_0+\mathcal{O}\left(r_k^{-1}\right)\dashint_{\partial B_{r_k}}\Big(|\nabla w|^2+|w|^2\Big)+\mathcal{O}\left(1\right)\dashint_{\partial B_{r_k}}|w|^2,
\]
which may be further moulded into 
\begin{equation}\label{mould}
\int_{B_{r_k}}\left(|\nabla w|^2+|w|^2+|w|^{p+1}\right)\sigma_0\leq 3\int_{B_{r_k}}|\nabla w|^2\sigma_0+\mathcal{O}\left(1\right)\dashint_{\partial B_{r_k}}|\nabla w|^2+|w|^2.
\end{equation}
Through our choice of the vector field $\Phi_0$, \eqref{id}, \eqref{eq:Asigma} and since by \eqref{etabound} we have $\int_{\partial B_{r_k}}\eta_0=\mathcal{O}(1)$ we find
\[
\int_{B_{r_k}}\langle \nabla w,\A_0 \nabla w\rangle=\int_{B_{r_k}}|\nabla w|^2 I_0+|\langle \nabla w,\nu\rangle|^2J_0=\mathcal{O}(1)
\]
uniformly in $k$. By Proposition \ref{A+}\emph{a)} the matrix field $\A_0$ is positive definite outside the origin and so we may use the monotone convergence theorem and \eqref{IJ} to conclude that 
\begin{equation}\label{yeah}
\int_{\R^n}|\nabla w|^2 I_0=\left(\frac{n}{p+1}-\frac{n-2}{2}\right)\int_{\R^n}|\nabla w|^2 \sigma_0-\left(\frac{1}{2}-\frac{1}{p+1}\right)\int_{\R^n}|\nabla w|^2r\sigma_0'<\infty.
\end{equation}
We have  established in  \eqref{sigma'} in the proof of Proposition \ref{prop:main2}) that $\sigma_0'\leq 0$ and so we readily see that $\int_{\R^n}|\nabla w|^2 \sigma_0$ is finite at least when $p<p_S$. However, the same is true in the critical case as the following argument shows. Recall that from \eqref{eq:div} there follows 
\[
\sigma_0=-\frac{1}{n}r\sigma_0' +M\left(0,\frac{n}{2},\frac{r^2}{4}\right)\rho=-\frac{1}{n}r\sigma_0'+\rho.
\]
Also, from \eqref{sigma'+} we get
\[
-\frac{1}{n}r\sigma_0'=\frac{1}{n+2}M\left(1,\frac{n}{2}+2,\frac{r^2}{4}\right)\frac{r^2}{2}\rho
\]
and it is clear from \eqref{A.1} that for $r>1$
\[
\rho(r)\leq r^2 M\left(1,\frac{n}{2}+2,\frac{r^2}{4}\right)=-\frac{2(n+2)}{n}r\sigma_0'.
\]
Thus, $\sigma_0\leq -\frac{2n+3}{n}r\sigma_0'$ outside the unit ball. Without loss of generality we may assume that $r_0>1$.  Then,

\begin{align*}
\int_{B_{r_k}}|\nabla w|^2\sigma_0&=\int_{B_{1}}|\nabla w|^2\sigma_0+\int_{B_{r_k}\setminus B_{1}}|\nabla w|^2\sigma_0\leq C_1-C_2\int_{B_{r_k}\setminus B_{1}}|\nabla w|^2r\sigma_0'
\end{align*} 
for some positive constants $C_1$ and $C_2$ independent of $k$. Due to \eqref{yeah} we have
\[
0\leq -\int_{B_{r_k}\setminus B_{1}}|\nabla w|^2r\sigma_0'\leq -\int_{\R^n}|\nabla w|^2r\sigma_0'<\infty. 
\]
Finally, coming back to \eqref{mould}, we see that we can pass to the limit with $r_k$ to get the estimate 
\[
\int_{\R^n}\Big(|\nabla w|^2+|w|^2+|w|^{p+1}\Big)\sigma_0<\infty.
\]
Since $\sigma_0\in \mathcal{O}\left(r^{-n}\right)$, this in turn implies that 
\begin{align*}
\int_{1}^\infty s^{n-1}\mathcal{O}\left(s^{-n}\right)\dashint_{\partial B_s}|\nabla w|^2+|w|^2+|w|^{p+1}= \int_{\R^n\setminus{B_1}}\Big(|\nabla w|^2+|w|^2+|w|^{p+1}\Big)\sigma_0<\infty.
\end{align*}
But, the left hand side cannot be finite unless there exists another (relabelled) sequence $r_k$ such that 
 
\[
\dashint_{\partial B_{r_k}}|\nabla w|^2+|w|^2+|w|^{p+1}\rightarrow 0 
\]
as $r_k\to \infty$. This concludes the proof.

\end{proof}

We will now turn our attention to the regime whereby $p$ is Sobolev critical and $\lambda>1$. For the remainder of the demonstration it is assumed that $w$ is radial. 

In order to simplify the notation we will from now on abbreviate $M\left(a,b,\frac{r^2}{4}\right)$ to $M(a,b)$. We further denote
\begin{equation}\label{us}
u_1=M\left(2-\lambda,\frac{n}{2}+2\right), \quad u_2=M\left(1-\lambda,\frac{n}{2}+2\right) \quad \mbox{and} \quad u_3=M\left(1-\lambda,\frac{n}{2}+1\right).
\end{equation}

\begin{proof}[Proof of Proposition \ref{A+}b)]

First note that when $w$ is radial then our expression \eqref{eq:Asigma} for the bilinear form associated with the field $\A_\lambda$ simplifies to give
\begin{equation}\label{st}
\int_{B_r}\left\langle\nabla w,\A_\lambda\nabla w\right\rangle  =\int_{B_r}|\nabla w|^2 I_\lambda+|\langle \nabla w,\nu\rangle |^2 J_\lambda= \int_{B_r}|w'|^2\left( I_\lambda+J_\lambda\right).
\end{equation}
Set 
\begin{equation}\label{flambda}
\Pi_\lambda=I_\lambda+J_\lambda=\frac{\frac{n}{2}+\lambda}{\frac{n}{2}+1}\left(\frac{1-\lambda}{\frac{n}{2}+\lambda}u_1+\frac{1}{n}u_2\right)\frac{r^2}{2}\rho+\left(\frac{n}{p+1}-\frac{n-2}{2}\right)u_3\rho. 
\end{equation}
Since $p=p_S$ and $n\geq 3$ the expression for $\Pi_\lambda$ further simplifies to  
\[
\Pi_\lambda=\frac{\frac{n}{2}+\lambda}{\frac{n}{2}+1}\left(\frac{1-\lambda}{\frac{n}{2}+\lambda}u_1+\frac{1}{n}u_2\right)\frac{r^2}{2}\rho.
\]
We recognise now that 
\[
\frac{1-\lambda}{\frac{n}{2}+\lambda}u_1(0)+\frac{1}{n}u_2(0)=\frac{1-\lambda}{\frac{n}{2}+\lambda}+\frac{1}{n}\leq 0\quad \mbox{whenever}\quad  \lambda\geq \frac{3n}{2(n-1)}=:\lambda_*.
\]
Also, 
\[
\left(\frac{1-\lambda}{\frac{n}{2}+\lambda}u_1+\frac{1}{n}u_2\right)'=-\frac{\lambda-1}{\frac{n}{2}+2}\left[\frac{2-\lambda}{\frac{n}{2}+\lambda}M\left(3-\lambda,\frac{n}{2}+3\right)+\frac{1}{n}M\left(2-\lambda,\frac{n}{2}+3\right)\right]\frac{r}{2}.
\]
Both Kummer functions present on the right-hand-side are strictly positive for $\lambda\leq 2$ and the whole expression is strictly negative for $\lambda\in (1,2]$ and $r>0$. Thus, whenever $\lambda\in \left[\lambda_*,2\right]$ we have that $\frac{1-\lambda}{\frac{n}{2}+\lambda}u_1+\frac{1}{n}u_2$ starts off nonpositive and then decreases as $r$ grows.  This interval is nonempty for $n\geq 4$. We find therefore that in these circumstances $\Pi_\lambda(r)$ is of constant negative sign for $r>0$.

\end{proof}

In the Sobolev critical case with $\lambda>1$ the methodology put forward in this paper is sensitive enough to rule out nontrivial radial solutions for a range of parameters specified in the Liouville Theorem. Now we will show that outside of this range the technique is \emph{inconclusive}, by which we mean that, if there is a Liouville theorem for parameters falling outside of the indicated range, then its demonstration \emph{cannot} rely on the definiteness of the $\A$ matrix field. In order to show it, i.e.\,prove Proposition \ref{A+}{\it c)}, we first need to gather some observations and tools. 

\subsection{Behaviour of $\Pi_\lambda$ at the origin}
We know already what happens near the origin for dimensions $n\geq 3$:
\[
r^{-2}\Pi_\lambda(r)\xrightarrow[r\to 0]{}\frac{\frac{n}{2}+\lambda}{n+2} \left(\frac{1-\lambda}{\frac{n}{2}+\lambda}+\frac{1}{n}\right),
\]
so that 
\[
\begin{cases} \Pi_\lambda(0)\geq 0&\quad \mbox{for}\quad \lambda\leq \lambda_*\\
\Pi_\lambda(0)< 0&\quad \mbox{for}\quad \lambda> \lambda_*.
\end{cases}
\]
When $n=1$ or $2$ then the situation is simpler: 
\[
\Pi_\lambda(r)\xrightarrow[r\to 0]{} 
\frac{n}{p+1}-\frac{n-2}{2}>0.
\]

\subsection{Asymptotic behaviour of  $\A$}

\begin{lem}\label{asymptotic}
Let $n\geq 1$ and $\lambda>1$, then as $r\to \infty$ we have
\begin{equation}\label{ranges}
\mbox{asymptotic sign of } \Pi_\lambda=\begin{cases}&+\mbox{ for }\lambda\in (2,3]\cup (4,5]\cup (6,7]\cup\dots, \\
&-\mbox{ for }\lambda\in (1,2]\cup (3,4]\cup (5,6]\cup\dots.
\end{cases}
\end{equation}
\end{lem}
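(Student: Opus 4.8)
The plan is to read off the leading-order behaviour of $\Pi_\lambda$ as $r\to\infty$ directly from the large-argument asymptotics of the Kummer functions $u_1,u_2,u_3$ appearing in \eqref{flambda}, and then to track the sign of the dominant coefficient. Recall from the Appendix (the expansion \eqref{A.5}) that, provided the first parameter $a$ is not a nonpositive integer,
\[
M\left(a,b,\tfrac{r^2}{4}\right)\rho\sim \frac{\Gamma(b)}{\Gamma(a)}\left(\tfrac{r^2}{4}\right)^{a-b}\qquad(r\to\infty),
\]
the factor $e^{r^2/4}$ cancelling against $\rho=e^{-r^2/4}$ and leaving a pure power of $r$. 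I would split the argument according to whether $\lambda$ is a positive integer, since at integer values the Kummer series terminate and the dominant balance changes.

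First I would treat the \emph{generic case}, $\lambda\notin\mathbb{Z}$. Here $a=2-\lambda$ and $a=1-\lambda$ are never nonpositive integers, so the displayed asymptotic applies to all three functions. Computing the exponents $a-b$ one finds $u_1\rho=\mathcal{O}(r^{-2\lambda-n})$, while $u_2\rho=\mathcal{O}(r^{-2\lambda-n-2})$ and $u_3\rho=\mathcal{O}(r^{-2\lambda-n})$. After inserting the explicit prefactors and the weight $r^2/2$ in \eqref{flambda}, the $u_1$ contribution decays like $r^{-2\lambda-n+2}$ and \emph{strictly} dominates the other two (both of order $r^{-2\lambda-n}$). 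Hence the asymptotic sign of $\Pi_\lambda$ equals the sign of the coefficient multiplying $u_1\rho\,r^2/2$, namely
\[
\frac{1-\lambda}{\frac{n}{2}+1}\cdot\frac{\Gamma\!\left(\frac{n}{2}+2\right)}{\Gamma(2-\lambda)}.
\]
Since $\Gamma(\tfrac n2+2)>0$ and $\frac{1-\lambda}{n/2+1}<0$ for $\lambda>1$, this sign is $-\sgn\Gamma(2-\lambda)$. Using that $\Gamma$ alternates sign on the successive intervals $(-1,0),(-2,-1),\dots$ between its poles, one reads off precisely the alternation claimed in \eqref{ranges} on each open interval $(k,k+1)$.

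It remains to cover the \emph{endpoints} $\lambda=k\in\{2,3,4,\dots\}$, and this is the step I expect to be the main obstacle, since there the factor $1/\Gamma(2-\lambda)$ vanishes and the generic asymptotic degenerates. At an integer $\lambda=k$ all three functions become genuine polynomials (terminating series), each contributing a polynomial times $\rho$; now $u_2=M(1-k,\tfrac n2+2)$ has degree $k-1$, so $u_2\,r^2\rho$ carries the highest power $r^{2k}\rho$ and strictly dominates $u_1 r^2\rho$ and $u_3\rho$ (each of order $r^{2k-2}\rho$). The asymptotic sign is therefore governed by the leading coefficient of $u_2$, which from the terminating series equals $(-1)^{k-1}/(\tfrac n2+2)_{k-1}$, multiplied by the positive prefactor $\frac{n/2+k}{(n/2+1)n}$. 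This gives asymptotic sign $(-1)^{k-1}$, i.e. $-$ for even $k$ and $+$ for odd $k$, exactly the value assigned to the right endpoint of each half-open interval in \eqref{ranges}. Combining the two cases proves the lemma; the only genuine care needed is the sign bookkeeping of $\Gamma$ across its poles in the generic case and the explicit leading coefficient of the terminating series at the integers.
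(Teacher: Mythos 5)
Your proposal is correct and follows essentially the same route as the paper: for non-integer $\lambda$ you apply \eqref{A.5}, identify the $u_1$ term as dominant, and read the sign from $\frac{1-\lambda}{\Gamma(2-\lambda)}$ via the sign alternation of $\Gamma$, while at integer $\lambda=k$ you use the terminating series, where the $u_2$ term dominates with sign $(-1)^{k-1}$ --- exactly the paper's two-case argument. Your sign bookkeeping (including the leading coefficient $(-1)^{k-1}/\left(\tfrac{n}{2}+2\right)_{k-1}$ of $u_2$) is accurate throughout.
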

\begin{proof}
Assume first that $a\neq 0,-1,-2,\dots$, then by \eqref{A.5} we have 
\[
M\left(a,b\right)\rho\left(\frac{r^2}{4}\right)^{b-a}\longrightarrow \quad \frac{\Gamma(b)}{\Gamma(a)}
\]
as $r\to \infty$ and this implies for \eqref{us} the following asymptotics. 
\begin{equation}\label{3}
\begin{cases}
u_1\rho\left(\frac{r^2}{4}\right)^{\frac{n}{2}+\lambda}&\longrightarrow \quad \frac{\Gamma\left(\frac{n}{2}+2\right)}{\Gamma\left(2-\lambda\right)},\\
u_2\rho\left(\frac{r^2}{4}\right)^{\frac{n}{2}+\lambda+1}&\longrightarrow \quad \frac{\Gamma\left(\frac{n}{2}+2\right)}{\Gamma\left(1-\lambda\right)},  \\
u_3\rho\left(\frac{r^2}{4}\right)^{\frac{n}{2}+\lambda}&\longrightarrow \quad \frac{\Gamma\left(\frac{n}{2}+1\right)}{\Gamma\left(1-\lambda\right)}.
\end{cases}
\end{equation}

 Our expression \eqref{flambda} may be rephrased as

\begin{align*}
\left(\frac{r^2}{4}\right)^{\frac{n}{2}+\lambda-1}\Pi_\lambda(r)=&2\left(\frac{\frac{n}{2}+\lambda}{\frac{n}{2}+1}\right)\left(\frac{1-\lambda}{\frac{n}{2}+\lambda}\right) u_1\rho \left(\frac{r^2}{4}\right)^{\frac{n}{2}+\lambda}+\frac{2}{n}\left(\frac{\frac{n}{2}+\lambda}{\frac{n}{2}+1}\right)u_2\rho \left(\frac{r^2}{4}\right)^{\frac{n}{2}+\lambda}\\
&+\left(\frac{n}{p+1}-\frac{n-2}{2}\right)u_3\rho\left(\frac{r^2}{4}\right)^{\frac{n}{2}+\lambda-1}
\end{align*}
and due to \eqref{3} we find that
\[
\mbox{asymptotic sign of } \Pi_\lambda=\mbox{sign of }\frac{1-\lambda}{\Gamma(2-\lambda)}=\begin{cases}&+\mbox{ for }\lambda\in (2,3)\cup (4,5)\cup (6,7)\cup\dots, \\
&-\mbox{ for }\lambda\in (1,2)\cup (3,4)\cup (5,6)\cup\dots.
\end{cases} 
\]
When $\lambda=m=2,3,4,\dots$, then by \eqref{A.1} the Kummer functions involved reduce to polynomials: 
\[
\begin{cases}
u_1\left(\frac{r^2}{4}\right)^{2-m}&\longrightarrow \quad (-1)^{m-2} \frac{(m-2)!}{\left(\frac{n}{2}+2\right)_{m-2}},\\
u_2\left(\frac{r^2}{4}\right)^{1-m}&\longrightarrow \quad (-1)^{m-1}\frac{(m-1)!}{\left(\frac{n}{2}+2\right)_{m-1}},  \\
u_3\left(\frac{r^2}{4}\right)^{1-m}&\longrightarrow \quad (-1)^{m-1}\frac{(m-1)!}{\left(\frac{n}{2}+1\right)_{m-1}}.
\end{cases}
\]
Therefore
\begin{align*}
\frac{1}{\rho}\left(\frac{r^2}{4}\right)^{-m}\Pi_\lambda(r)=&2\left(\frac{\frac{n}{2}+m}{\frac{n}{2}+1}\right)\left(\frac{1-m}{\frac{n}{2}+m}\right) u_1 \left(\frac{r^2}{4}\right)^{1-m}+\frac{2}{n}\left(\frac{\frac{n}{2}+m}{\frac{n}{2}+1}\right)u_2 \left(\frac{r^2}{4}\right)^{1-m}\\
&+\left(\frac{n}{p+1}-\frac{n-2}{2}\right)u_3\left(\frac{r^2}{4}\right)^{-m}.
\end{align*}
Thus 
\[
\mbox{asymptotic sign of } F_m=(-1)^{m-1}=\begin{cases}&+\mbox{ when }m \mbox{ is odd}, \\
&-\mbox{ when }m\mbox{ is even}.
\end{cases}
\]
\end{proof}

\subsection{Sturmian comparison}

One of the key ingredients in the below considerations will be the relative position of roots of specific Kummer functions that make up the expression for $\Pi_\lambda$. To this end we will take advantage of yet another identity, the Picone identity \cite{PICONE}, which states that whenever $v,\omega, K v'$ and $L \omega'$ are differentiable functions of $r$ and $\omega(r)\neq 0$, then 
\[
v\left( K v'\right)'-\frac{v^2}{\omega}\left( L \omega'\right)'+(K-L)v'^2+L\left(v'-\omega'\frac{v}{\omega}\right)^2=\left[\frac{v}{\omega}(K v'\omega-L v \omega')\right]'.
\]

\begin{lem}\label{picone}
Let $\lambda>2$, then there exists a point where $\Pi_\lambda$ attains a negative value and if moreover $n\geq 3$ and $\lambda>3$ then $\Pi_\lambda$ attains positive values as well.  
\end{lem}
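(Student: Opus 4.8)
The plan is to establish the two assertions of Lemma \ref{picone} separately, exploiting the asymptotic information already encoded in Lemma \ref{asymptotic} together with the sign of $\Pi_\lambda$ at the origin. For the first claim, fix $\lambda>2$. The key observation is that the sign of $\Pi_\lambda$ at the origin and its asymptotic sign at infinity are governed by independent computations, and for a suitable subrange they disagree, forcing $\Pi_\lambda$ to take a negative value somewhere. Concretely, from the analysis of the behaviour at the origin we have $\Pi_\lambda(0)<0$ precisely when $\lambda>\lambda_*=\frac{3n}{2(n-1)}$, while Lemma \ref{asymptotic} tells us the asymptotic sign alternates on the unit intervals $(m,m+1]$. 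When $\lambda\in(2,3]$ the asymptotic sign is $+$, and when $\lambda\in(3,4]$ it is $-$; in the latter case combined with $\Pi_\lambda(0)<0$ one would need a finer argument, so the cleanest route is to identify, for each $\lambda>2$, at least one location where negativity is forced.

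\emph{First I would} handle the regime where the origin already gives negativity: since $\lambda_*\leq 2$ for all $n\geq 2$ (indeed $\lambda_*=\frac{3n}{2(n-1)}\to \frac{3}{2}$ and is at most $2$ once $n\geq 3$), any $\lambda>2$ automatically exceeds $\lambda_*$, so $\Pi_\lambda(0)<0$ and the first assertion follows immediately from continuity by evaluating near the origin. This disposes of the first claim for $n\geq 3$; for $n=1,2$ the origin is positive and one must instead appeal to the asymptotic sign from Lemma \ref{asymptotic}, which is $-$ for $\lambda\in(1,2]\cup(3,4]\cup\dots$, so that negativity is realised at large $r$ for those $\lambda$, while for $\lambda\in(2,3]\cup(4,5]\cup\dots$ a separate intermediate-root argument via the Picone identity is needed. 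Here is where the Picone identity enters: writing $\Pi_\lambda$ as a combination of Kummer functions $u_1,u_2,u_3$ satisfying confluent hypergeometric ODEs, I would use Picone (Sturmian) comparison to locate a root of the dominant constituent function and thereby pin down a sign change.

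For the second assertion, assume $n\geq 3$ and $\lambda>3$. Now $\Pi_\lambda(0)<0$ (since $\lambda>\lambda_*$), so to produce positive values I need a point where $\Pi_\lambda>0$. The natural candidates are dictated again by Lemma \ref{asymptotic}: when $\lambda\in(4,5]\cup(6,7]\cup\dots$ the asymptotic sign is $+$, giving positivity directly at infinity. The genuinely delicate subcase is $\lambda\in(3,4]$ (and more generally the even-labelled intervals), where both the origin and infinity give negative signs, so positivity, if it occurs, must happen at some intermediate radius. \emph{The hard part will be} exactly this: establishing that $\Pi_\lambda$ rises above zero on a bounded interval even though it is negative at both ends. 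This is where the Sturmian comparison via the Picone identity is indispensable — by comparing $\Pi_\lambda$ (or an appropriate auxiliary function built from $u_1,u_2,u_3$) against a reference solution with known oscillation, I would show that the relevant Kummer function $M(2-\lambda,\cdot)$ acquires a root in the interior when $\lambda>3$, and that crossing this root flips $\Pi_\lambda$ to a positive value before it returns to its negative asymptotic regime. I expect the integration-by-parts bookkeeping in the Picone identity, and the correct choice of the coefficient functions $K,L$ and comparison function $\omega$, to be the main technical obstacle, since the sign of $(K-L)$ and of the boundary term must be controlled precisely to conclude the desired interlacing of zeros.
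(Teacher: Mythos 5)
Your proposal contains a genuine gap, and the step meant to close the easy cases rests on a numerical error. You claim $\lambda_*=\frac{3n}{2(n-1)}\leq 2$ once $n\geq 3$, but $\lambda_*=\frac{9}{4}>2$ for $n=3$. Hence for $n=3$ and $\lambda\in\left(2,\frac{9}{4}\right)$ the function $\Pi_\lambda$ is \emph{positive} near the origin, while by Lemma \ref{asymptotic} its asymptotic sign for $\lambda\in(2,3]$ is also positive; so in this range no endpoint argument (origin plus infinity) can ever produce the required negative value. The same obstruction occurs for $n=1,2$ with $\lambda\in(2,3]\cup(4,5]\cup\dots$ (origin positive, infinity positive), and for the second assertion on the intervals $\lambda\in(3,4]\cup(5,6]\cup\dots$ with $n\geq 3$, where both the origin and infinity give negative sign so positivity must occur at an interior radius. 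In all of these cases you explicitly defer to an unexecuted ``intermediate-root argument via the Picone identity'' --- but those interior cases are the entire content of the lemma, which exists precisely because the endpoint data are sometimes inconclusive; indeed the paper later invokes this lemma in Proposition \ref{A+}c) exactly where origin and asymptotic signs agree.

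The paper's proof uses neither the origin analysis nor Lemma \ref{asymptotic}. Its mechanism, which your sketch never identifies, is to evaluate $\Pi_\lambda$ at a \emph{root of $u_2$}: by \eqref{flambda}, at any $\iota$ with $u_2(\iota)=0$ the expression collapses to $\frac{1-\lambda}{\frac{n}{2}+1}\,u_1(\iota)\,\frac{\iota^2}{2}\,\rho$ plus a $u_3$-term (absent when $p=p_S$, $n\geq 3$), so its sign is read off from $-u_1(\iota)$. The Picone identity with $K=L=r^{n+3}\rho$ then interlaces the zeros: $u_2$ must vanish at some $\iota$ before the first root $\kappa$ of $u_1$, where $u_1(\iota)>0$ gives $\Pi_\lambda(\iota)<0$; a second Picone comparison (with $K=L=r^{n+1}\rho$) together with the derivative relation $\frac{r}{2}u_1=\frac{\frac{n}{2}+1}{1-\lambda}u_3'$ controls the sign $u_3(\iota)<0$ needed for $n=1,2$; and for $\lambda>3$ the function $u_1$ has two roots $\kappa<\kappa'$ by \eqref{A.6}, and Picone forces a root $\iota'\in(\kappa,\kappa')$ of $u_2$, where $u_1(\iota')<0$ yields $\Pi_\lambda(\iota')>0$. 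Note also that your sketch misdirects the comparison: showing that $u_1=M\left(2-\lambda,\frac{n}{2}+2\right)$ ``acquires a root'' needs no Sturmian argument at all --- its zero count follows directly from \eqref{A.6} once $\lambda>2$ --- whereas the Sturmian machinery is needed to locate the roots of $u_2$ \emph{relative to} those of $u_1$, the step your plan never reaches.
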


\begin{proof}
Due to \eqref{A.2} we know that our three Kummer functions satisfy:
\begin{equation}\label{u123}
\begin{cases}
\left(r^{n+3}\rho u_1'\right)'+(\lambda-2)\rho r^{n+3}u_1=0,&\\
\left(r^{n+3}\rho u_2'\right)'+(\lambda-1)\rho r^{n+3}u_2=0,&\\
\left(r^{n+1}\rho u_3'\right)'+(\lambda-1)\rho r^{n+1}u_3=0,&\\
\end{cases}
\end{equation}
with $u_i(0)=1$ and $u_i'(0)=0$ for $i=1,2,3$. \\
Assume that $\lambda>0$ and the dimension may be taken arbitrary. Due to \eqref{A.6} we know that $u_1$ has at least one positive root. Let $\kappa$ denote the smallest of them. We will show that $u_2$ has a root in the interval $(0,\kappa)$. Suppose not, then $u_2>0$ in this interval. In such a case we may employ the Picone identity with 
\[
v=u_1, \quad \omega=u_2 \quad {and}\quad K=L=r^{n+3}\rho
\]
to get
\begin{equation}\label{pic}
u_1^2\rho+r^{n+3}\rho\left(u_1'-u_2'\frac{u_1}{u_2}\right)^2=\left[r^{n+3}\rho\frac{u_1}{u_2}( u_1'u_2-u_1 u_2')\right]',
\end{equation}
which, when integrated between the origin and $\kappa$, yields
\[
\int_0^{\kappa}u_1^2\rho+\int_0^{\kappa}r^{n+3}\rho\left(u_1'-u_2'\frac{u_1}{u_2}\right)^2=0.
\]
This integral identity implies that $u_1\equiv 0$ in $[0,\kappa]$ - a contradiction. Hence, there must have been a root of $u_2$ in $(0,\kappa)$. It is of no relevance to this argument whether $u_2$ changes sign more than once in this interval. Let us however denote its first (possibly only) root by $\iota$. Since $u_1$ is strictly positive in $(0,\kappa)$,  at this root in particular we find that
\[
\frac{1-\lambda}{\frac{n}{2}+\lambda}u_1(\iota)+\frac{1}{n}u_2(\iota)<0. 
\]

Next we will argue that $u_3$ has a unique root in the interval $[0,\iota]$.   
First, observe that the equation for $u_2$ may  be rewritten as

\[
\left(r^{n+1}\rho u_2'\right)'+2r^{n}\rho u_2'+(\lambda-1)\rho r^{n+1}u_2=0.
\]
Suppose that $u_3$ remains strictly positive in $(0,\iota)$. Then, if we set 
\[
v=u_2, \quad \omega=u_3 \quad {and}\quad K=L=r^{n+1}\rho
\]
and resort to the Picone identity again, we get 
\[
-2r^n\rho u_2'u_2+r^{n+3}\rho\left(u_2'-u_3'\frac{u_2}{u_3}\right)^2=\left[\frac{u_2}{u_3}r^{n+1}\rho( u_2'u_3- u_2 u_3')\right]'.
\]
Now integrate the above over $(0,\iota)$ to find
\begin{equation}\label{blah}
-2\int_{0}^\iota r^n\rho u_2'u_2+\int_{0}^\iota r^{n+1}\rho\left(u_2'-u_3'\frac{u_2}{u_3}\right)^2=0.
\end{equation}
We know that $u_2>0$ in $[0,\iota)$ and moreover from \eqref{u123} we learn that 
\[
u_2'(r)=\frac{1-\lambda}{r^{n+3}\rho(r)}\int_0^r\rho(s) s^{n+3}u_2(s)\d s,
\]
which yields in particular that $u_2'<0$ in $(0,\iota]$. Thus, \eqref{blah} cannot hold and in effect $u_3$ must have had a root in $(0,\iota)$. Note, that the differentiation rule for Kummer functions tie $u_1$ and $u_3$ in the relation
\[
\frac{r}{2}u_1(r)=\frac{\frac{n}{2}+1}{1-\lambda}u_3'(r).
\]
Since $u_1$ is strictly positive in $[0,\iota]$ we conclude that $u_3$ is strictly decreasing in $(0,\iota]$. Hence, $u_3$ may have only one root in this interval. In particular $u_3(\iota)<0$. 

Consider now the value of $\Pi_\lambda$ at $r=\iota$. We have 
\[
\frac{1}{\rho(\iota)}\Pi_\lambda(\iota)=\begin{cases}
&-\frac{\lambda-1}{3}u_1(\iota)\iota^2+\left(\frac{1}{p+1}+\frac{1}{2}\right)u_3(\iota)\quad \mbox{ for }\quad n=1,\\
&-\frac{\lambda-1}{4}u_1(\iota)\iota^2+\frac{2}{p+1} u_3(\iota)\quad \mbox{ for }\quad n=2,\\
&-\frac{\lambda-1}{n+2}u_1(\iota)\iota^2\quad \mbox{ otherwise}.
\end{cases}
\]
Since $u_1(\iota)>0$ and $u_3(\iota)<0$ we find $\Pi_\lambda(\iota)<0$ in every dimension.  

Next, assume that $n\geq 3$ and $\lambda>3$. Observe that $u_1$ has at least two roots. Select the first two $0<\kappa<\kappa'$. We already know that the first root of $u_2$ satisfies $\iota \in (0,\kappa)$. The reasoning based on the Picone identity shows that there needs to be another root of $u_2$ - call it $\iota'$ (not necessarily the consecutive one) - that lies in $(\kappa,\kappa')$, because otherwise we could integrate \eqref{pic} in this interval and arrive at a contradiction. Since $u_1$ is negative in  $(\kappa,\kappa')$ we find that $\Pi_\lambda(\iota')>0$.  

\end{proof}

We are in position to finish the demonstration of Proposition \ref{A+}. 

\begin{proof}[Proof of Proposition \ref{A+}c)]
Clearly for $n\geq 3$ and $\lambda>3$ Lemma \ref{picone} already proves the result. In other cases the argument will differ in details.  
\subsubsection*{\bf Case of $n=1$ or $2$:} First observe that no matter what values $\lambda$ or $p$  may assume, $\Pi_\lambda$ is strictly positive  at the origin. However, when $\lambda\in (1,2]$ then $\Pi_\lambda$ attains negative values sufficiently far from the origin by \eqref{ranges}. Then, for $\lambda>2$, we may invoke Lemma \ref{picone} and again assert that $\Pi_\lambda$ must have changed sign at some point. 
 
\subsubsection*{\bf Case of $n=3$:} In this dimension $\lambda_*=\frac{9}{4}$ and we see in particular that $\Pi_\lambda(0)>0$ for $\lambda\in (1,2]$. On the other hand we already know that for such $\lambda$ and $r$ large enough $\Pi_\lambda$ is negative. 
If $\lambda\in \left(2,\frac{9}{4}\right)$ then $\Pi_\lambda(0)$ remains positive. However, by Lemma \ref{picone} $\Pi_\lambda(r)$ must change sign to negative somewhere. Further still when $\lambda\in \left[\frac{9}{4},3\right]$ there is a disparity between the just mentioned negative values and the positive value expected asymptotically by Lemma \ref{asymptotic}. 
\subsubsection*{\bf Case of $n\geq 4$ and $\lambda\notin [\lambda_*,2]$:}
Suppose first that $\lambda<\lambda_*$. At the origin we find that $\Pi_\lambda$ is positive but we already know that $\Pi_\lambda$ will eventually become negative. For $\lambda\in (2,3]$ the situation swaps with negative values at the origin and positive asymptotically. 

Hence, outside of the conditions specified in $a)$ and $b)$ of Proposition \ref{prop:main2} the vector field $\Phi_\lambda$ gives rise to a matrix field $\A_\lambda(r)$ that when applied to radial fields may be positive or negative definite depending on the distance from the origin.

\end{proof}

\section{Discussion}
A question arises whether confluent hypergeometric functions are a natural choice of building blocks for our integral identities. An application of Kummer functions in the context of the Rellich-Pohozaev technique is to the best of our knowledge new. Nevertheless, it should be mentioned, that these special functions do appear in the study of singularities for the Fujita equation for example in the improved blow-up asymptotics in the Sobolev subcritical regime \cite{FILIPPASKOHN}, construction of type II blow-ups in the Joseph-Lundgren supercritical regime \cite{HV} and in the study of self-similar blow-ups in the Lepin supercritical regime \cite{MIZOGUCHI2009,PLECHACSVERAK2003}. In all these instances Kummer functions appear in the context of the linearisation of the nonlinear operator 
\[
L v:=\Delta v  -\frac{1}{2}\la x,\nabla v\ra -\frac{1}{p-1} v+|v|^{p-1}v    
\]
 around a regular or singular solution of equation \eqref{eq:eq} for $\lambda=1$. In this sense Kummer functions and their special subfamily of Hermite polynomials are structurally associated with our problem. 
 
A general scheme of reasoning could be applied to a larger family of equations e.g.\,we could replace the canonical version of the Ornstein-Uhlenbeck operator with a more general operator and consider equations of the form  
\[
\mathrm{Tr}\left(Q\nabla^2 w\right)+\la P x,\nabla w\ra-a(x)w+f(w)=0, 
\]
where $Q$ is a symmetric positive definite matrix, $P$ is some nonzero matrix, $a(x)$ is a given bounded or singular function and $f$ is of superlinear growth. On the one hand we have taken advantage of some very specific properties of Kummer functions counterparts of which will be unavailable in this generalised situation. On the other hand some of the properties we needed to assert stemmed not so much from the explicit form of the Kummer functions but rather from comparison/maximum/positivity properties of second order equations of a particular structure and which may still be within reach. 

In this paper we have worked in the class of bounded entire solutions and we could ask whether the Liouville Theorem would hold if we weakened regularity assumptions. However, in many cases weak solutions are in fact smooth like for example in the class of locally Lipschitz continuous functions, see \cite{GIGAKOHN1985}, Lemma 1. When it comes to admissible asymptotic behaviour of solutions then (depending on $\lambda$) their boundedness could be relaxed to a suitable polynomial bound on growth. If however we restrict our attention to the class of smooth positive radial solutions then a much stronger statement can be made as according to Lemma 2.1 in \cite{MIZOGUCHI2009} (with obvious modifications accounting for $\lambda\neq 1$) every such solution is bounded and therefore our Liouville Theorem applies to all positive radial entire solutions without any growth bounds. It would be interesting to check whether  the assumption of radial symmetry could be dropped to achieve a level of generality of the celebrated Liouville theorem for positive solutions of the Lane-Emden equation of Gidas and Spruck \cite{GIDASSPRUCK1981}.

Finally, it would be interesting to see whether the Liouville property fails in the critical case outside the range specified in our theorem, as this is the only range where it remains undecided whether nontrivial solutions exist or not. In this branch of analysis of partial differential equations it is often the case that a failure of the Rellich-Pohozaev argument is a strong indicator of the existence of nontrivial solutions. The description of failure of definiteness of the matrix field $\A$ in the proof of Proposition \ref{A+} provides some idea as to the characteristics of possible solutions. In particular for $n=4$ we find $\lambda_\ast=2$, i.e.\,the set of $\lambda$ for which the Liouville theorem holds is given by $(-\infty,1]\cup \{2\}$. It is an intriguing scenario whereby as we increase $\lambda>1$ nontrivial solutions may cease to exist for one value of the parameter only.

\section*{Appendix}
Below we collect a tiny selection of properties of Kummer functions that were used in the preceding calculations. For proofs and further results concerning these remarkable functions an interested reader may consult e.g.\,\cite{BUCHHOLZ,TRICOMI1950}.

 Kummer functions are solutions of Kummer's differential equation
\[
z\frac{\d^2 w}{\d z^2}+(b-z)\frac{\d w}{\d z}-a w=0,
\]
where $a,b,z\in \mathbb C$. In our context however it suffices to restrict our attention to the real case, $z=\xi\in \R$ with $a,b\in \R$ and $b>0$. \\
There exists an analytical solution given by the series  
\begin{equation}\label{A.1}\tag{A.1}
M(a,b,\xi)=1+\frac{a}{b}\xi+\frac{(a)_2}{(b)_2}\frac{\xi^2}{2!}+\dots +\frac{(a)_m}{(b)_m}\frac{\xi^m}{m!}+\dots,
\end{equation}
where 
\[
(a)_m=a(a+1)(a+2)\dots (a+m-1), \quad (a)_0=1.
\]
Observe that when $a$ is a nonpositive integer $M(a,b,\xi)$ reduces to a polynomial. \\
By performing a quadratic substitution, $\xi=\frac{r^2}{4}$, we find that $M\left(a,b,\frac{r^2}{4}\right)$ solves the Hermite equation of the form 
\begin{equation}\label{A.2}\tag{A.2}
u''+\left(\frac{2 b-1}{r}-\frac{r}{2}\right)u'-a u=0 \quad \equiv\quad \left(r^{2 b-1}\rho u'\right)'-a \rho r^{2b-1}u=0 
\end{equation}
for $r>0$ and with $u(0)=1$, 
which in turn is a radial case of the equation 
\begin{equation}\label{A.3}\tag{A.3}
\frac{1}{\rho}\nabla\cdot\left(\rho\nabla u\right)+\frac{2 b-n}{|x|^2}\langle \nabla u,x\rangle-a u=0
\end{equation}
on $\R^n$.

There are three features of Kummer functions that we take advantage of:
\begin{itemize}
\item special identities: 
\begin{equation}\label{A.4}\tag{A.4}\begin{split}
\frac{\partial}{\partial \xi}M(a,b,\xi)&=\frac{a}{b}M(a+1,b+1,\xi),\\
\e^{-\xi}M(a,b,\xi)&=M(b-a,b,-\xi), \\
\frac{a}{b}M(a+1,b+1,\xi)&= M(a,b,\xi)-\frac{b-a}{b}M(a,b+1,\xi), \\
M(a,b,\xi)&=M(a-1,b,\xi)+\frac{\xi}{b} M(a,b+1,\xi),\\
M(b,b,\xi)&=\e^\xi,\\
M(0,b,\xi)&=1;
\end{split}
\end{equation}
\item asymptotic behaviour as $\xi \to +\infty$: 
\begin{equation}\label{A.5}\tag{A.5}
M(a,b,\xi)=\begin{cases}\frac{\Gamma(b)}{\Gamma(a)}\e^\xi \xi^{a-b}\Big[1+\mathcal{O}\left(\xi^{-1}\right)\Big] &\mbox{ when }a\neq 0,-1,-2,\dots, \\
\frac{(a)_{k}}{(b)_{k}}\xi^{k}\Big[1+\mathcal{O}\left(\xi^{-1}\right)\Big] &\mbox{ when } -a=k\in \mathbb{N};
\end{cases}
\end{equation}

\item roots: when $a,b\geq 0$ there are no positive roots of $M(a,b,\xi)$. Let $\pi(a,b)$ be the number of positive zeroes for $M(a,b,\xi)$, then 
\begin{equation}\label{A.6}\tag{A.6}
\pi(a,b)=\begin{cases} 0 &\mbox{ when  }a\geq 0, \, b\geq 0,\\
\left \lceil{-a}\right \rceil &\mbox{ when  }a<0, \, b\geq 0. 
\end{cases}
\end{equation}
Roots are simple so that $M(a,b,\xi)$ changes sign when passing through zero. 
\end{itemize}

\section*{Acknowledgements}
This work was supported by the Polish National Science Center, grant no 2017/26/D/ST1/00614.

\bibliographystyle{plain}
\bibliography{refs}


\end{document}